\algrenewcommand{\algorithmiccomment}[1]{\hfill $\rhd$ \emph{#1}}
\algrenewcommand{\algorithmicrequire}{\textbf{Input:}}
\algrenewcommand{\algorithmicensure}{\textbf{Output:}}
\algnewcommand{\Or}{\textbf{or}}
\algnewcommand{\And}{\textbf{and}}
\algnewcommand{\Not}{\textbf{not}\,}
\algnewcommand\algorithmicforeach{\textbf{for each}}
\renewcommand{\P}{\mathbb{P}}
\newcommand{\Z}{\mathbb{Z}}
\newcommand{\Q}{\mathbb{Q}}
\newcommand{\leqnomode}{\tagsleft@true}
\newcommand{\reqnomode}{\tagsleft@false}
\newtheorem*{theorem*}{Theorem}
\newtheorem{theorem}{Theorem}[section]
\newtheorem{lemma}[theorem]{Lemma}
\newtheorem{proposition}[theorem]{Proposition}
\newtheorem{corollary}[theorem]{Corollary}
\newenvironment{customthm}[1]
{\innercustomthm}
{\endinnercustomthm}
\theoremstyle{definition}
\newtheorem{definition}[theorem]{Definition}
\newtheorem{example}[theorem]{Example}
\newtheorem{remark}[theorem]{Remark}
\def\thm@space@setup{%
  \thm@preskip=5pt \thm@postskip=5pt
}
\title[Graph Curve Matroids]{Graph Curve Matroids}
\author{Alheydis Geiger}
\address{Max-Planck-Institute for Mathematics in the Sciences, Leipzig, Germany}
\email{alheydis.geiger@mis.mpg.de}
\author{Kevin K\"uhn}
\address{Institut f\"ur Mathematik, Goethe--Universit\"at Frankfurt,
60325 Frankfurt am Main, Germany}
\email{kuehn@math.uni-frankfurt.de}
\author{Raluca Vlad}
\address{Department of Mathematics, Brown University, Providence, RI, USA}
\email{raluca\_vlad@brown.edu}
\begin{document}
\begin{abstract}
We introduce a new class of matroids, called \emph{graph curve matroids}. A graph curve matroid is associated to a graph and defined on the vertices of the graph as a ground set. 
We prove that these matroids provide a combinatorial description of hyperplane sections of degenerate canonical curves in algebraic geometry. Our focus lies on graphs that are 
$2$-edge-connected and trivalent. These define \emph{identically self-dual} graph curve matroids, 
but we also develop generalizations. 
Finally, we provide an algorithm to compute the graph curve matroid associated to a given graph, as well as an
implementation in \texttt{Macaulay2} and data of examples.  
\end{abstract}

\maketitle

\section{Introduction}

Connections between matroids and graphs are well-studied in combinatorics and neighboring areas \cite{Oxley2005OnTI,Tutte1959MatroidsAG}. However, the study of matroids associated to curves and their algebraic geometry is less explored. Some advances in this area are \cite{graphcurves_colorings_matroids,geiger2022selfdual}. This paper explores a new direction connecting matroids, graphs, and algebraic curves. 

In \cite{geiger2022selfdual}, matroids are associated to canonical curves by means of a generic hyperplane section and the resulting point configuration. As a special case, \cite{geiger2022selfdual} considers graph curves -- a degeneration of smooth canonical curves into unions of lines. A graph curve of genus $g$, as defined in~\cite{BE91}, is the union of $2g-2$ lines in $\P^{g-1}$ such that each line intersects exactly three others in nodes.
Up to projective transformation, a graph curve is uniquely defined by its dual graph. This dual graph has a vertex for each line of the curve, and an edge between two vertices when
the corresponding lines intersect. The intersection of a graph curve with a generic hyperplane of $\P^{g-1}$ yields a configuration of $2g-2$ points of the hyperplane, which in turn yields a realizable matroid on $2g-2$ elements. This associated matroid only depends on the graph dual to the curve~\cite[Theorem 5.5]{geiger2022selfdual}. Note that this matroid is fundamentally different from the graphic matroid or the cographic matroid associated to a graph, as the ground set of the former is the set of vertices of the graph, while the ground set of the latter is the set of edges.

The focus in \cite{geiger2022selfdual} is on the algebraic description of this matroid associated to a graph via its graph curve. 
In this paper, we provide a combinatorial point of view on this topic.
We exhibit a purely combinatorial description of the matroid, 
which can be computed using Algorithm~\ref{algo:matroid}. Our description and algorithm do not depend on the graph curve itself nor on the choice of the generic hyperplane, but only on the dual graph. More precisely, we introduce a \emph{graph curve matroid} associated to \emph{any} graph (Definition \ref{def:matroid_graphcurve}), and we show that in the case of graphs dual to graph curves, our combinatorial definition coincides with the algebraic construction in \cite{geiger2022selfdual}.

\begin{customthm}{\ref{thm:graphcurve}}
The matroid arising from a generic hyperplane section of a graph curve coincides with the graph curve matroid associated to the dual graph of the curve.
\end{customthm}

Generic hyperplane sections of canonical curves give (identically) self-dual (or self-associated) point configurations as investigated in \cite{CobleAssociatedSO,dolgachev_ortland, Petrakiev}. In the language of matroids, this means that the complement of every basis is again a basis. The property of being identically self-dual (ISD) for matroids has been studied in \cite{Lindstrm, gracia_padro}.
It is a consequence of the Riemann-Roch Theorem for singular curves that the graph curve matroid from a generic hyperplane section of a graph curve is ISD \cite{geiger2022selfdual}. 
We observe that this property and its algebraic proof hold even more generally for the graph curve matroid associated to any trivalent, $2$-edge-connected graph. 
Moreover, using our combinatorial description, we prove this self-duality only from combinatorial principles. 

\begin{customthm}{\ref{thm:self-duality}}
Let $G$ be a trivalent, connected graph. Then its graph curve matroid $M_G$ is identically self-dual if and only if $G$ is 2-edge-connected. 
\end{customthm}

The theory developed in this paper for graph curve matroids is not only helpful in filling the combinatorial gap from \cite{geiger2022selfdual}, but also opens up new interesting questions in combinatorial algebraic geometry. For example, one can ask for an axiomatization of this new class of matroids. 
We know that graph curve matroids are representable over $\Q$ when they come from hyperplane sections of graph curves (Theorem~\ref{thm:graphcurve}), but representability in the general case remains an open question.
One could also investigate further connections between graphs and their graph curve matroids, like conditions for a non-trivalent graph to give an ISD graph curve matroid, or whether there is a combinatorial characterization for when two non-isomorphic graphs define isomorphic graph curve matroids. 
Moreover, one could investigate relations between our graph curve matroids and graph divisors in the chip-firing literature, such as \cite{baker_norine, YUEN2017}.

This paper is structured as follows. Section~\ref{section_matroids} presents the main definition of the graph curve matroid and revisits its connection to graph curves in algebraic geometry. Section~\ref{section_circuits} develops an understanding of the circuits of the graph curve matroid in terms of the underlying graph. Using these results, Section~\ref{section_selfdual} gives a combinatorial proof of the fact that
the graph curve matroids coming from trivalent, 2-edge-connected graphs are identically self-dual. Finally, Section~\ref{section-non-isomorphic-graphs} presents first results on the question 
(first stated in \cite{geiger2022selfdual}) 
of when non-isomorphic graphs define isomorphic graph curve matroids.

The mathematical data for this paper, which includes examples of graphs and their associated graph curve matroids, as well as the implementation of the algorithms in \texttt{Macaulay2}~\cite{M2}, is available at:
\begin{center}
    \url{https://mathrepo.mis.mpg.de/GraphCurveMatroids}. 
\end{center}

\noindent

\subsection*{Funding}
This project has received funding from the Deutsche Forschungsgemeinschaft (DFG, German Research Foundation) TRR 326 \emph{Geometry and Arithmetic of Uniformized Structures}, project number 444845124, from the DFG Sachbeihilfe \emph{From Riemann surfaces to tropical curves (and back again)}, project number 456557832, as well as the DFG Sachbeihilfe \emph{Rethinking tropical linear algebra: Buildings, bimatroids, and applications}, project number 539867663, within the SPP 2458 \emph{Combinatorial Synergies}.  
Part of this project was carried out while the third author visited the Max Planck Institute for Mathematics in the Sciences.

\subsection*{Acknowledgements}
The authors would like to thank 
M. Chan, 
K. Devriendt, 
A. Gross, 
M. Joswig, 
L. Kastner, 
A. Kuhrs,  
B. Schröter, 
B. Sturmfels, and 
M. Ulirsch for helpful discussions and comments.
This project originated at the Graduate Student Meeting on Applied Algebra and Combinatorics at KTH Royal Institute of Technology and Stockholm University 2023.

\section{Matroids and Graphs}\label{section_matroids}

For any matroid $M$ on a finite ground set $E$, its \emph{dual matroid} $M^*$ is the matroid on the same ground set, whose bases are the complements of the bases of $M$. Let $r$ be the rank function of $M$, which assigns to each subset of $E$ the cardinality of any maximal independent subset. Then, by \cite[Proposition~2.1.9]{oxley2011matroid}, we have the following relation for the rank function $r^*$ of the dual matroid $M^*$:
\begin{equation}\label{eqn:rank-fcn-dual-matroid}
r^*(X)=r(E-X)+|X|-r(E) \;\;\;\;\;\; \text{ for all } X \subseteq E.
\end{equation}
A matroid is called \emph{identically self-dual} (ISD) if $r=r^*$.

Throughout this paper, graphs are assumed to be undirected, but need not be simple, i.e., we allow parallel edges and loops. Let $G = (V,E)$ be such a graph. 
For any subset $A \subseteq V$ of vertices or $X \subseteq E$ of edges, 
let $G[A]$ and $G[X]$ denote the subgraphs of $G$ induced on $A$ and $X$, respectively. We denote by $\omega(A)$ and $\omega(X)$ the number of connected components of $G[A]$ and $G[X]$, respectively. Moreover, for any subset $A \subseteq V$ of vertices, we let $\delta(A) \subseteq E$ be the set of edges of $G$ that are incident to at least one vertex in $A$.

By \cite[\S1.3.8]{oxley2011matroid}, the rank function of the \emph{graphic} or \emph{cycle matroid} of $G$ is given by:
\begin{equation}\label{eqn:rank-fcn-graphic-matroid}
r(X) = |V(G[X])| - \omega(X) \;\;\;\;\;\; \text{ for all } X \subseteq E.
\end{equation}
Henceforth, we will use $r$ exclusively for the rank function of a graphic matroid. Moreover, $r^*$ will denote the rank function of its dual matroid, called the \emph{cographic} or \emph{bond matroid} of $G$.

\subsection{The Graph Curve Matroid}
Recall that for any finite ground set $V$, a function $f:2^V\to \Z$ is \emph{submodular} if $f(A \cup B)+f(A\cap B) \leq f(A) + f(B)$ for any $A,B \subseteq V$. Furthermore, $f$ is \emph{increasing} if $A\subseteq B$ implies $f(A)\leq f(B)$. 

\begin{lemma}\label{lemma-submodular}
For any graph $G=(V,E)$, the map
\begin{align*}
f:2^{V}&\longrightarrow \Z \\
A&\longmapsto r^*(\delta(A))-1
\end{align*}
is submodular and increasing. 
\end{lemma}
\begin{proof}For $A,B\subseteq V$, we have $\delta(A \cup B) = \delta(A) \cup \delta(B)$ and $\delta(A \cap B) \subseteq \delta(A) \cap \delta(B)$. Since the rank function $r^*$ 
is submodular and increasing, so is $r^*(\delta(\cdot)) - 1$. 
\end{proof}

If $G$ is simple, then Lemma \ref{lemma-submodular} is a special case of \cite[Example 1.8]{Lovasz}, where we consider the edge set as a system of $2$-element subsets of $V$.

\begin{definition}\label{def:matroid_graphcurve}
Given a graph $G=(V,E)$, its \emph{graph curve matroid} $M_G$ is the matroid on the ground set $V$ given by the following collection of circuits: 
$$\mathcal{C}= \{A \subseteq V \mid A \text{ is minimal 
among non-empty subsets of $V$ s.t.\ } r^*(\delta(A)) \leq |A|\}.$$
\end{definition}

Note that $M_G$ is a well-defined matroid by \cite[Proposition 11.1.1]{oxley2011matroid} applied to the increasing submodular function $f$ from Lemma \ref{lemma-submodular}. 

\begin{example}
Consider the complete graph $K_4$ on $4$ vertices (see Figure \ref{fig:K4-graph}). Applying the formulas \eqref{eqn:rank-fcn-dual-matroid} and \eqref{eqn:rank-fcn-graphic-matroid}, we get that $r^*(\delta(A))=3$ for a vertex subset $A$ with $|A|\geq 2$, while $r^*(\delta(\{v\}))=2$ for any vertex $v$.
Later, in Lemma~\ref{lemma_rank_bondmatroid}, we provide a general formula for $r^*(\delta(A))$ for any connected graph $G$ and vertex subset $A$.

Continuing our example, Definition~\ref{def:matroid_graphcurve} implies that the circuits of $M_{K_4}$ are the vertex subsets of size $3$.
Thus, 
$M_{K_4}$ is the uniform matroid~$U_{2,4}$.
\end{example}

\begin{example}
    Let $G$ be the vertex-labelled \emph{cone graph} from Figure~\ref{fig:cone-graph}. Applying formulas \eqref{eqn:rank-fcn-dual-matroid} and \eqref{eqn:rank-fcn-graphic-matroid}, we see that $r^*(\delta(\{3\})) = 1$, hence $\{3\}$ is a loop in $M_G$. Moreover, we get that $r^*(\delta(A)) = 2$ when the vertex subset $A$ equals $\{1\}$, $\{2\}$, or $\{1,2\}$. Therefore, $\{1,2\}$ is the only other circuit of $M_G$, and thus $M_G \cong U_{0,1} \oplus U_{1,2}$.
\end{example}

\begin{figure}[h]
\begin{subfigure}[c]{.45\textwidth}\centering    
\begin{tikzpicture}[scale = 0.35]
\coordinate (1) at (0,0);
\coordinate (2) at (0,4);
\coordinate (3) at (4,0);
\coordinate (4) at (4,4);

\foreach \i in {1,2,3,4} {\path (\i) node[circle, black, fill, inner sep=2]{};}

\draw[black] (1)--(2)--(3)--(4)--(1)--(3);
\draw[black] (2)--(4);
\node[left] at (1) { 1};
\node[left] at (2) { 2};
\node[right] at (3) { 3};
\node[right] at (4) { 4};

\end{tikzpicture}

\caption{The complete graph $K_4$.} \label{fig:K4-graph}
\end{subfigure}
\begin{subfigure}[c]{.45\textwidth}\centering    
\begin{tikzpicture}[scale = 0.35]
\coordinate (1) at (0,0);
\coordinate (2) at (0,4);
\coordinate (3) at (4,2);

\foreach \i in {1,2,3} {\path (\i) node[circle, black, fill, inner sep=2]{};}

\draw[black] (1)--(3)--(2);
\draw[black] (1) .. controls (0.8,1) and (0.8,2.7) .. (2);
\draw[black] (1) .. controls (-0.8,1) and (-0.8,2.7) .. (2);

\node[left] at (1) {1};
\node[left] at (2) {2};
\node[right] at (3) {3};
\end{tikzpicture}

\caption{The cone graph.} \label{fig:cone-graph}
\end{subfigure} 
\caption{}
\end{figure}

We emphasize that the ground set of a graph curve matroid $M_G$ is the vertex set of the graph $G$, unlike the ground set of the (co)graphic matroid, which is the edge set of $G$.
From now on, $r_{M_G}$ will denote the rank function of $M_G$.
We will justify the term ``graph curve matroid'' in Subsection~\ref{section_graphCurvesCanonicalMaps}, in regard to the algebro-geometric interpretation of this matroid in the case of trivalent, $2$-edge-connected graphs. 
Also, note that a graph curve matroid need not correspond to a unique graph (see Section~\ref{section-non-isomorphic-graphs}). 

Following Definition~\ref{def:matroid_graphcurve}, Algorithm~\ref{algo:matroid} computes the circuits of a graph curve matroid.

\algloopdefx{NoEndIf}[1]{\textbf{if} #1 \textbf{then}}
\algloopdefx{NoEndFor}[1]{\textbf{for each} #1 \textbf{do}}

\begin{algorithm}[h!] \label{alg:matroid}
\caption{Computing the graph curve matroid $M_G$}
\label{algo:matroid}
\begin{algorithmic}[1]
\Require{Graph $G=(V,E)$.}
\Ensure{List $L$ of circuits of $M_G$.}
\State $L:= \emptyset$
\NoEndFor{$C\subseteq V$ with $C \neq \emptyset$} 
\NoEndIf{$r^*(\delta(C))\leq |C|$ \textbf{ and } $r^*(\delta(A))>|A| \; \; \;  \forall \, A \subsetneq C$ with $A \neq \emptyset$}
\State $L = L \cup \{C\}$ 
\State \Return $L$
\end{algorithmic}
\end{algorithm}

The following criteria for (in)dependence in $M_G$ are immediate from Definition~\ref{def:matroid_graphcurve}.

\begin{proposition}\label{prop_dependenceM_G}
A vertex subset $A\subseteq V$ is \dots
\begin{enumerate}[(a)]\item \dots dependent in $M_G$ if and only if it contains a subset $\emptyset \neq A'\subseteq A$ with 
$$r^*(\delta(A'))\leq |A'|.$$
\item \dots independent in $M_G$ if and only if for all subsets $\emptyset\neq A' \subseteq A$ we have
$$r^*(\delta(A'))>|A'|.$$
\end{enumerate}
\end{proposition}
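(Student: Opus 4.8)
The plan is to reduce both statements to the standard matroid fact that a set is dependent if and only if it contains a circuit, and then to unwind the description of the circuits of $M_G$ given in Definition~\ref{def:matroid_graphcurve}.

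For part~(a) I would prove both implications. If $A$ is dependent, then by the circuit characterization of dependence it contains some circuit $C\in\mathcal{C}$; by definition of $\mathcal{C}$ such a $C$ is nonempty and satisfies $r^*(\delta(C))\leq |C|$, so $A':=C$ is the desired subset. Conversely, suppose there is $\emptyset\neq A'\subseteq A$ with $r^*(\delta(A'))\leq |A'|$. Consider the finite, nonempty family $\mathcal{F}=\{B : \emptyset\neq B\subseteq A',\ r^*(\delta(B))\leq |B|\}$, which contains $A'$. It has an inclusion-minimal element $C$. Since any proper nonempty subset of $C$ is in particular a proper nonempty subset of $A'$, minimality of $C$ in $\mathcal{F}$ forces $r^*(\delta(B))>|B|$ for every such $B$; hence $C$ is minimal among \emph{all} nonempty subsets of $V$ with $r^*(\delta(\cdot))\leq|\cdot|$, i.e.\ $C\in\mathcal{C}$. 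Thus $A$ contains a circuit and is dependent.

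Part~(b) is then immediate: $A$ is independent exactly when it is not dependent, and negating the criterion of~(a) yields precisely that $r^*(\delta(A'))>|A'|$ for all $\emptyset\neq A'\subseteq A$.

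The only point needing a little care — not really an obstacle — is the passage from ``$A$ contains some subset satisfying the inequality'' to ``$A$ contains a circuit'': one must note that an inclusion-minimal element of $\mathcal{F}$ is genuinely a circuit of $M_G$, i.e.\ minimal among all such subsets of $V$ and not merely among subsets of $A'$. Finiteness of $V$ guarantees existence of such a minimal element, and the containment $C\subseteq A'$ makes the two notions of minimality agree. Everything else is a direct translation of definitions.
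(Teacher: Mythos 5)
Your proposal is correct and matches what the paper intends: the paper gives no explicit proof, stating that the proposition ``follows immediately'' from Definition~\ref{def:matroid_graphcurve}, and your argument is exactly the standard unwinding of that definition (dependent iff contains a circuit, plus the observation that an inclusion-minimal element of the family of subsets of $A'$ satisfying the inequality is genuinely minimal among all such subsets of $V$). The extra care you take with that last minimality point is the only nontrivial step, and you handle it correctly.
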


\begin{example}
Checking all non-empty subsets of $A$ in Proposition \ref{prop_dependenceM_G} (b) is necessary, as there can be a vertex subset $A$ that is dependent in $M_G$ but satisfies $r^*(\delta(A))> |A|$. For example, in the \emph{double house graph} in Figure \ref{fig:double-house-graph}, the subset $A = \{1,2,3,6\}$ is dependent in $M_G$, because it contains the dependent subset $A' = \{1,2,3\}$ with $r^*(\delta(A')) = 3 = |A'|$. However, computation shows that $r^*(\delta(A)) = 5 > |A|$.
\end{example}

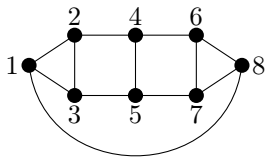
\begin{figure}[h]
    \centering
    \begin{tikzpicture}[scale = 0.4]

\coordinate (1) at (0,0);
\coordinate (2) at (1.5,1);
\coordinate (3) at (1.5,-1);
\coordinate (4) at (3.5,1);
\coordinate (5) at (3.5,-1);
\coordinate (6) at (5.5,1);
\coordinate (7) at (5.5,-1);
\coordinate (8) at (7,0);

\foreach \i in {1,2,3,4,5,6,7,8} {\path (\i) node[circle, black, fill, inner sep=2]{};}

\draw[black] (6)--(7)--(8)--(6);
\draw[black] (6)--(4)--(2)--(1)--(3)--(5)--(7);
\draw[black] (2)--(3);
\draw[black] (4)--(5);
\draw[black] (1) .. controls (0.5,-4) and (6.5,-4) .. (8);

\node[left] at (1) {\small 1};
\node[above] at (2) {\small 2};
\node[below] at (3) {\small 3};
\node[above] at (4) {\small 4};
\node[below] at (5) {\small 5};
\node[above] at (6) {\small 6};
\node[below] at (7) {\small 7};
\node[right] at (8) {\small 8};
\end{tikzpicture}
    \caption{The double house graph.} 
    \label{fig:double-house-graph}
\end{figure}

\noindent

\subsection{Trivalence and \texorpdfstring{$\mathbf{2}$}{2}-Connectivity}

If a graph $G = (V,E)$ is the disjoint union of two graphs $G_1 = (V_1, E_1)$ and $G_2 = (V_2, E_2)$, then the rank functions of the cographic matroids of these graphs satisfy
$$r^*_{G} (X) = r^*_{G_1} (X \cap E_1) + r^*_{G_2} (X \cap E_2) \hspace{9mm} \text{for all } X \subseteq E.$$
Proposition \ref{prop_dependenceM_G}  immediately implies $M_G = M_{G_1} \oplus M_{G_2}$. Therefore, it suffices to restrict our attention to understanding the construction of $M_G$ for connected graphs.

For a positive integer $k$, a connected graph $G$ is \emph{$k$-edge-connected} if removing any $k-1$ or fewer edges from $G$ does not disconnect the graph. Similarly, $G$ is \emph{$k$-vertex-connected} if removing any $k-1$ or fewer vertices from $G$ does not disconnect it. Whenever we refer to a graph as ``$k$-connected,'' we mean ``$k$-edge-connected.'' We call a graph \emph{trivalent} if every vertex has degree $3$. For trivalent graphs, vertex and edge connectivity are closely related:

\begin{lemma}\label{lemma_k-connectedness}
Let $G$ be trivalent. Then $G$ is \dots
\begin{enumerate}[(a)]
\item \dots $2$-connected if and only if it is loopless and $2$-vertex-connected. 
\item \dots $3$-connected if and only if either it is simple and $3$-vertex-connected or it is the theta graph (cf.\ Figure~\ref{fig:theta}).
\end{enumerate}
\end{lemma}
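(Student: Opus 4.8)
The plan is to handle the two parts separately: part (b) is exactly \cite[Lemma 2.6]{BE91}, so I would simply cite it, and all the work is in part (a), which I would prove directly in both directions. Throughout I would use that $k$-edge connectivity already includes connectedness, and that — with the convention fixed just above the statement — a connected graph is $2$-vertex connected precisely when it has no cut vertex.

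For the forward direction of (a), I would assume $G$ trivalent and $2$-edge connected, and first establish looplessness: a loop at a vertex $v$ would use up two of the three edge-ends at $v$, leaving exactly one ordinary edge $e$ at $v$, and deleting $e$ would isolate $v$, contradicting $2$-edge connectivity. Having ruled out loops, it then suffices to exclude cut vertices, so I would suppose $v$ is a cut vertex, write $C_1,\dots,C_k$ (with $k\ge 2$) for the components of $G-v$, observe that each $C_i$ meets $v$ in at least one edge (by connectedness of $G$), and conclude from $\deg(v)=3$ — and the absence of a loop at $v$ — that $k\le 3$ and that some $C_i$ meets $v$ in exactly one edge $f$. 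Since no edge of $G$ joins two distinct $C_j$'s, this $f$ is the unique edge of $G$ with exactly one endpoint in $C_i$, i.e.\ a bridge — again a contradiction. For the converse I would assume $G$ trivalent, loopless, and $2$-vertex connected (hence connected), suppose $e=uw$ is a bridge (so $u\ne w$, by looplessness), and split $G-e$ into its two components $C_u\ni u$ and $C_w\ni w$. Here I would use trivalence and looplessness again: the two edges at $u$ other than $e$ must reach vertices of $C_u$ distinct from $u$, so $|C_u|\ge 2$, and symmetrically $|C_w|\ge 2$. Then deleting $u$ separates the nonempty sets $C_u\setminus\{u\}$ and $C_w$ — their only connecting edge in $G$ was $e$, which is incident to $u$ — so $G-u$ is disconnected and $u$ is a cut vertex, contradicting $2$-vertex connectivity.

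I do not expect a genuine obstacle here; the whole argument is bookkeeping with vertex degrees and connected components. The one thing to be careful about is keeping the split-off components nonempty, and this is exactly where the hypotheses bite: without looplessness the statement of (a) fails, since a degree-$3$ vertex carrying a loop together with one bridge is $2$-vertex connected but not $2$-edge connected, so the looplessness assumption must genuinely be invoked at the two places above. I would also flag the convention that ``$2$-vertex connected'' means ``connected with no cut vertex'', so that degenerate small graphs (such as two vertices joined by three parallel edges) are not misread as counterexamples to (a).
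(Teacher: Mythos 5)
Your proof is correct. The paper itself offers no argument for this lemma: part (b) is attributed to \cite[Lemma 2.6]{BE91}, and part (a) is stated without proof, presumably as a standard fact. Your degree-counting argument for (a) is exactly the kind of elementary bookkeeping one would expect, and the two places where you invoke looplessness (to guarantee that the cut vertex $v$ sends three distinct ordinary edges into the components of $G-v$, and to guarantee $|C_u|,|C_w|\geq 2$ so that the separated pieces are nonempty) are precisely where the hypothesis is needed; your two-vertex example with a loop at each end of a bridge confirms that (a) fails without it. The caveat about reading ``$2$-vertex connected'' as ``connected with no cut vertex'' is also consistent with the paper's conventions, which allow parallel edges and treat the theta graph as $2$-connected.
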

\begin{proof}
For part (a), trivalence implies that a disconnecting edge must be incident to a loop or a disconnecting vertex, and vice versa.
For (b), see \cite[Lemma 2.6]{BE91}.
\end{proof}

Even though our graph curve matroid is well-defined for any graph, for the rest of the paper we will mostly restrict our attention to trivalent, $2$-connected graphs, since this case results in interesting matroid properties and algebro-geometric interpretations. Below, we state some properties of $M_G$ when $G$ is trivalent, related to $2$-connectivity.

\begin{lemma}\label{lemma_equality_circuits}
Let $G = (V,E)$ be trivalent.
\begin{enumerate}[(a)]
\item The graph curve matroid $M_G$ is loopless if and only if $G$ is $2$-connected.
\item If $G$ is $2$-connected, then any circuit $C\subseteq V$ in $M_G$ satisfies $r^*(\delta(C))=|C|$.
\end{enumerate}
\end{lemma}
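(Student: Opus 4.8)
We prove part (a) first and then deduce part (b) from it. Since $M_G$ is the direct sum of the graph curve matroids of the connected components of $G$, we may assume throughout that $G$ is connected.

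\textbf{Part (a).} The plan is to translate ``$M_G$ has a loop'' into a statement about $G$. A loop of $M_G$ is exactly a vertex $v$ such that $\{v\}$ is a circuit, and since $\{v\}$ has no nonempty proper subsets, Definition~\ref{def:matroid_graphcurve} says this happens if and only if $r^*(\delta(\{v\}))\le 1$. To compute $r^*(\delta(\{v\}))$ I would combine the dual rank formula $r^*(B)=r(E-B)+|B|-r(E)$ with the graphic rank formula $r(F)=|V|-(\text{number of components of }(V,F))$. The spanning subgraph $(V,E-\delta(\{v\}))$ is $G[V\setminus\{v\}]$ together with $v$ as an isolated vertex, so it has $\omega(V\setminus\{v\})+1$ components, while $(V,E)$ has one; this yields the clean identity
$$r^*(\delta(\{v\})) \;=\; |\delta(\{v\})| - \omega(V\setminus\{v\}).$$
Now I would split into two cases according to whether $v$ is incident to a loop edge. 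If not, trivalence gives $|\delta(\{v\})|=3$, hence $\{v\}$ is a loop of $M_G$ if and only if $\omega(V\setminus\{v\})\ge 2$, i.e.\ if and only if $v$ is a cut vertex of $G$. If $v$ is incident to a loop, trivalence forces $|\delta(\{v\})|=2$ while $\omega(V\setminus\{v\})\ge 1$, so $\{v\}$ is automatically a loop of $M_G$; in this case $G$ is not loopless. Combining the cases, $M_G$ is loopless if and only if $G$ is loopless and has no cut vertex, that is, if and only if $G$ is loopless and $2$-vertex connected. By Lemma~\ref{lemma_k-connectedness}(a) this is equivalent to $G$ being $2$-edge connected, which is our notion of $2$-connected.

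\textbf{Part (b).} Here part (a) does most of the work: when $G$ is $2$-connected, $M_G$ is loopless, so every circuit has at least two elements. Let $C$ be a circuit and choose any $v\in C$; then $C':=C\setminus\{v\}$ is nonempty. By minimality of $C$ in Definition~\ref{def:matroid_graphcurve}, the proper subset $C'$ does not satisfy the defining inequality, so $r^*(\delta(C'))>|C'|=|C|-1$, i.e.\ $r^*(\delta(C'))\ge |C|$. Since $C'\subseteq C$ gives $\delta(C')\subseteq\delta(C)$ and $r^*$ is increasing, we obtain $r^*(\delta(C))\ge r^*(\delta(C'))\ge|C|$. On the other hand $r^*(\delta(C))\le|C|$ because $C$ is a circuit. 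Hence $r^*(\delta(C))=|C|$.

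\textbf{Main obstacle.} The work is concentrated in part (a): one must relate the rank function of the bond matroid to component counts of edge-deleted subgraphs and then keep careful track of loops and parallel edges, so that the resulting graph condition lines up precisely with $2$-edge connectivity (this last matching being exactly where Lemma~\ref{lemma_k-connectedness}(a) enters). Part (b) is then a short formal consequence using only minimality of circuits and monotonicity of $r^*$, with no further graph theory needed.
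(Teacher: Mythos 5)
Your proof is correct. The paper (an extended abstract) states this lemma without printing a proof, but your argument is exactly the expected one: your singleton computation $r^*(\delta(\{v\}))=|\delta(\{v\})|-\omega(V\setminus\{v\})$ is the $A=\{v\}$ instance of the paper's Lemma~\ref{lemma_rank_bondmatroid}, your case split on loop edges versus cut vertices feeds correctly into Lemma~\ref{lemma_k-connectedness}(a), and part (b) follows formally from circuit minimality plus monotonicity of $r^*$, with looplessness from (a) guaranteeing $|C|\geq 2$ so that $C\setminus\{v\}$ is non-empty. The only cosmetic caveat is that the stated equivalence in (a) should be read for connected $G$ (as your reduction implicitly assumes), since a disjoint union of two $2$-edge-connected graphs has loopless $M_G$ without being $2$-connected; this is an imprecision in the statement rather than in your argument.
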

\begin{proof}
(a) Let $v \in V$ be a vertex of $G$. Using \eqref{eqn:rank-fcn-dual-matroid} and \eqref{eqn:rank-fcn-graphic-matroid}, we get:
$$r^*(\delta(\{v\}))=r(E-\delta(\{v\}))+|\delta(\{v\})|-r(E)=r(E-\delta(\{v\}))+|\delta(\{v\})|-(|V|-1).$$
If $G$ is $2$-connected, then Lemma~\ref{lemma_k-connectedness} implies that $v$ is not disconnecting or incident to a loop, so $r(E-\delta(\{v\})) = |V|-2$ and $|\delta(\{v\})| = 3.$ Thus, $r^*(\delta(\{v\}))= 2$, so $\{v\}$ is not a circuit in $M_G$. On the other hand, if $v$ is disconnecting or incident to a loop, computation shows that $r^*(\delta(\{v\})) \in \{0,1\}$, so $\{v\}$ forms a loop in $M_G$.

\medskip

(b) By part (a), any circuit $C$ has at least two elements. Thus, $C-\{v\}$ is non-empty and independent for any $v \in C$. By Proposition~\ref{prop_dependenceM_G} and the fact that $r^*$ is increasing, we get the following chain of inequalities:
$$|C|-1<r^*(\delta(C-\{v\})) \leq r^*(\delta(C)) \leq |C|.$$
\end{proof}

\subsection{Graph Curves and Canonical Morphisms}\label{section_graphCurvesCanonicalMaps}

We recall a few terms and basic results about graph curves, for more see \cite{BE91,BRSV15}. 
For us, a \emph{graph curve} $C$ is a connected, projective algebraic curve whose components are copies of the projective line, such that each component intersects the others in precisely three nodes.

The isomorphism type of a graph curve is determined by the isomorphism type of its \emph{dual graph}, which has a vertex for each component of the curve and an edge for every intersection point of two components. Vice versa, for every trivalent, loopless, connected graph $G$, one can construct a graph curve with dual graph $G$. 

\begin{remark}
Graph curves are maximally degenerate stable curves. Up to isomorphism, each graph curve corresponds to a unique point in a deepest boundary stratum of the Deligne--Mumford compactification of the moduli space of smooth curves with a fixed genus~\cite{Chan2021ModuliSO, DM69}.
\end{remark}

From now on, we fix a graph curve $C$ with (trivalent) dual graph $G$. Let $\omega_C$ be its \emph{dualizing sheaf} (see \cite[Section~III.7]{Hartshorne}). It is the line bundle over $C$ constructed in the following way. 
Let $C'= \sqcup_{v \in V(G)} \P^1$ be the \emph{normalization} of $C$.
The smooth curve $C'$ is obtained by resolving the (nodal) singularities of $C$ and comes with a natural map $C' \to C$ such that the preimage of a point $p \in C$ consists of two points when $p$ is a node, and one point otherwise.
Then, $\omega_C$ is the push-forward along this map of the sheaf of rational differential forms on $C'$ with at worst simple poles at the points lying over the nodes of $C$ such that the residues at any two points lying over the same node add up to zero. 
In particular, on each component of $C$, $\omega_C$ restricts to $\mathcal{O}_{\P^1}(1)$, which is obtained from the sheaf of regular differentials $\omega_{\P^1} \cong \mathcal{O}_{\P^1}(-2)$ via twisting by the three nodal points on that component.

The \emph{arithmetic genus} of $C$ is given by $g:=
b_1(G)$, i.e., the first Betti number of $G$. Since $G$ is trivalent, we always have $|V|=2g-2$ and $|E|=3g-3$.

By \cite[Proposition~2.5]{BE91}, the dualizing sheaf $\omega_C$ is \emph{basepoint-free} (see \cite[Section~III.7]{Hartshorne}) 
if and only if $G$ is $2$-connected.
In that case, the line bundle $\omega_C$ yields the \emph{canonical morphism}
\begin{align*}
C&\longrightarrow \P(H^0(C,\omega_C)^\vee) \\
c &\longmapsto \big[ s\mapsto s(c) \big]\, .
\end{align*}
In explicit coordinates: any basis $s_1,\dots,s_g$ of $H^0(C,\omega_C)$ yields an identification $\P(H^0(C,\omega_C)^\vee) \cong \P^{g-1}$ under which the canonical morphism is given by
\begin{align*}
C &\longrightarrow \P^{g-1} \\
c &\longmapsto \big( s_1(c):\dots:s_g(c) \big)\, .
\end{align*}
For more details, see \cite[Section~IV.5]{Hartshorne}. Note that this map is well-defined because $\omega_C$ is basepoint-free. Also, this map is unique up to projective linear transformation of $\P^{g-1}$, i.e., up to change of basis $s_1, \dots,s_g$. 
Under this canonical morphism, $\omega_C$ is the pull-back of $\mathcal{O}_{\P^{g-1}}(1)$. Since $\omega_C$ restricts to $\mathcal{O}_{\P^1}(1)$ on each component of $C$, the canonical morphism embeds each component of $C$ onto a line in $\P^{g-1}$.

\begin{remark}\label{rmk:realization-cographic-matroid}
    The images of the nodes of $C$ under this canonical morphism realize the cographic matroid of $G$ \cite[page 3]{BE91}.
\end{remark}

\begin{remark}
The dualizing sheaf is very ample if and only if $G$ is $3$-connected and not the \emph{theta graph} (Figure~\ref{fig:theta}) \cite[Proposition 2.5]{BE91}. In that case, the canonical morphism is an embedding, hence $C$ can be identified with its image as an honest arrangement of lines in projective space $\P^{g-1}$. In particular, our notion of graph curve agrees with that of \cite{geiger2022selfdual} in the case of $3$-connected, simple graphs. However, if the dual graph is not $3$-connected and simple, then $C$ cannot be recovered from its image, as can be seen in Examples \ref{ex_theta} and \ref{ex_soda}.
\end{remark}

One realization of the canonical morphism
can be obtained from the combinatorics of $G$, as explained in~\cite[Section~5]{geiger2022selfdual}: Choose $g$ cycles generating the cycle space of $G$ and set $\text{Cyc}_G$ to be the $g\times (3g-3)$ matrix with these generators as edge-incidence row vectors. 
For every vertex $v$ of $G$, 
there is a linearly dependent triple of columns in $\text{Cyc}_G$ indexed by the edges incident to $v$,
which yields three distinct collinear points in $\P^{g-1}$. 
Under the canonical morphism, the component of $C$ corresponding to $v$ maps to the unique line through these three points in $\P^{g-1}$.

\begin{remark}
    By \cite[Theorem~7.47]{aigner_comb_theory}, $\text{Cyc}_G$ realizes the cographic matroid of $G$. This statement is equivalent to Remark~\ref{rmk:realization-cographic-matroid}, because the column vectors of $\text{Cyc}_G$ become the coordinates of the nodes of $C$ under this realization of the canonical map.
\end{remark}

We can associate a matroid to a graph curve using the canonical morphism. 
For a generic hyperplane $\P^{g-2}\cong H \subseteq \P^{g-1}$, the intersection of $H$ with the image of $C$ under the canonical morphism yields a multiset of $2g-2$ points in $H$, one point for every component of $C$. A point could appear with multiplicity $>1$ since the canonical morphism might send distinct components of $C$ to the same line in $\P^{g-1}$; see Figure~\ref{fig:graph_curve} for an example.
As $H$ is generic, the matroid given by the linear dependencies of this point configuration only depends on $G$ and will be denoted $M_G'$.

This process generalizes the one from \cite{geiger2022selfdual}, where only graph curves with $3$-connected, simple dual graphs are considered. In that case, $C$ is identified with its image under the canonical morphism, and all intersection points with a generic $H$ are distinct and have multiplicity~$1$. 

\begin{example}\label{ex_theta}
Let $C$ be the graph curve corresponding to the theta graph in Figure~\ref{fig:theta}. That is, $C$ is the union of two (abstract) projective lines that intersect transversally in three points. 
Hence, the canonical morphism $C\to \P^{g-1}=\P^1$ restricts to the identity on each line. A generic hyperplane of $\P^1$ is just a generic point of $\P^1$. This yields a point configuration consisting of a double point, whose matroid is $U_{1,2}$.
\end{example}

\begin{figure}[h]
\centering
\begin{subfigure}[c]{.45\textwidth}\centering    
\begin{subfigure}[c]{.8\textwidth}\centering
\begin{tikzpicture}[scale = 0.6]
\coordinate (1) at (0,0);
\coordinate (2) at (2,0);
\foreach \i in {1,2} {
\path (\i) node[circle, black, fill, inner sep=2]{};
}
\draw[black] (1)--(2); 
\draw[black] (1) to[out=70,in=110,distance=1.1cm]  (2);
\draw[black] (1) to[out=-70,in=-110,distance=1.1cm]  (2);

\node[left] at (1) {1};
\node[right] at (2) {2};
\end{tikzpicture}
\caption{The theta graph.}\label{fig:theta}
\end{subfigure}
\quad
\begin{subfigure}[c]{0.8\textwidth}\centering
\begin{tikzpicture}[scale = 0.6]
\coordinate (3) at (0,0);
\coordinate (1) at (0,2);
\coordinate (4) at (2,0);
\coordinate (2) at (2,2);
\foreach \i in {1,2,3,4} {
\path (\i) node[circle, black, fill, inner sep=2]{};
}
\draw[black] (1)--(3)  (4) -- (2) ; 
\draw[black] (1) to[out=40,in=140,distance=0.8cm]  (2);
\draw[black] (1) to[out=-40,in=-140,distance=0.8cm]  (2);

\draw[black] (3) to[out=40,in=140,distance=0.8cm]  (4);
\draw[black] (3) to[out=-40,in=-140,distance=0.8cm]  (4);

\node[left] at (1) {1};
\node[right] at (2) {2};
\node[left] at (3) {3};
\node[right] at (4) {4};
\node[right] at (0,4) {};

\end{tikzpicture}
\caption{The soda can graph.}\label{fig:soda_can}
\end{subfigure}
\end{subfigure}
\begin{subfigure}[c]{.45\textwidth}\centering
\begin{tikzpicture}[scale = 0.6]
\coordinate (1) at (0,2);
\coordinate (2) at (6,2);
\coordinate (3) at (0,0);
\coordinate (4) at (6,0);
\coordinate (5) at (2,3);
\coordinate (6) at (3.1,1.5);
\coordinate (7) at (2.7,0.2);
\coordinate (8) at (4,-1);
\coordinate (9) at (2.9,0.5);
\coordinate (10) at (3.3,1.8);

\draw[black] (1) to[out=-70,in=-110,distance=3cm]  (2);
\draw[teal] (3) to[out=70,in=110,distance=3cm]  (4);
\draw [blue] plot [smooth, tension=1] coordinates { (5) (6) (7) };
\draw [purple] plot [smooth, tension=1] coordinates { (8) (9) (10) };
\node[left] at (1) {1};
\node[right, teal] at (4) {2};
\node[left, blue] at (5) {4};
\node[right, purple] at (8) {3};

\draw[->]{(3,-1) -- (3,-2)};
\draw[blue]{(2.95,-3) -- (2.95,-6)};
\draw[purple]{(3.05,-3) -- (3.05,-6)};
\draw[black]{(0,-4.5) -- (6,-4.5)};
\draw[teal]{(0,-4.6) -- (6,-4.6)};

\end{tikzpicture}
\caption{The graph curve dual to the soda can graph, with its canonical morphism to $\P^2$.
}\label{fig:graph_curve}
\end{subfigure}
\caption{Two non-simple graphs and a canonical morphism.}
\label{fig:two_connected_graphs}
\end{figure}
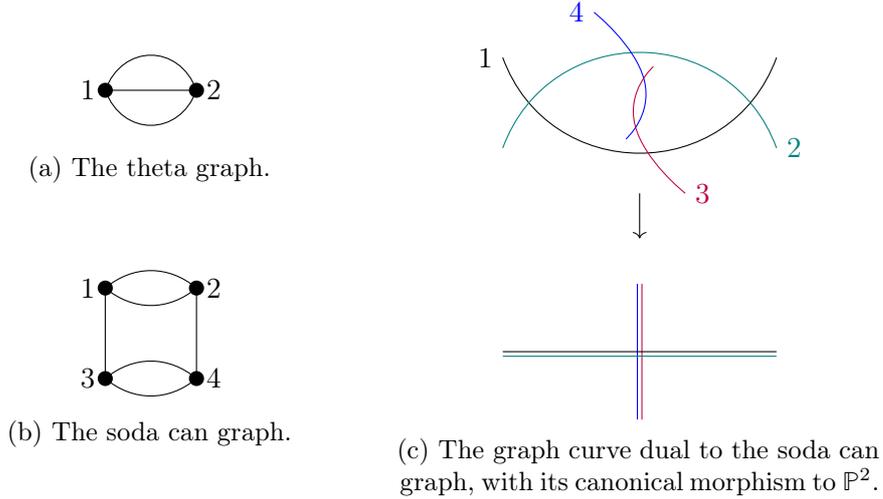

\begin{theorem}\label{thm:graphcurve}
For a trivalent, $2$-connected graph $G$, the graph curve matroid $M_G$ coincides with the matroid $M_G'$ arising from a generic hyperplane section of the graph curve dual to $G$.

\end{theorem}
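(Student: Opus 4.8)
The plan is to show directly that a vertex subset $A \subseteq V$ is independent in $M_G'$ if and only if it is independent in $M_G$, using the explicit matrix realization of the canonical map via $\mathrm{Cyc}_G$. Recall from Subsection~\ref{section_graphCurvesCanonicalMaps} that the image of the canonical map is a union of $2g-2$ lines $\ell_v \subseteq \P^{g-1}$, one for each vertex $v \in V$, and that $\ell_v$ is the unique line through the three collinear points in $\P^{g-1}$ coming from the three columns of $\mathrm{Cyc}_G$ indexed by the edges at $v$. Intersecting with a generic hyperplane $H \cong \P^{g-2}$ yields one point $p_v \in \ell_v \cap H$ for each $v$. The matroid $M_G'$ is the matroid on $V$ given by the rank function $A \mapsto 1 + \dim \overline{\{p_v : v \in A\}}$ (the affine span inside $H$, i.e.\ the projective span viewed in $\P^{g-2}$). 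So the key is to express $\dim \overline{\{p_v : v \in A\}}$ in terms of $r^*(\delta(A))$.

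First I would set up coordinates. Work in the vector space $\R^{3g-3}$ with the cycle space $\mathcal{Z} \subseteq \R^{3g-3}$ of dimension $g$ spanned by the rows of $\mathrm{Cyc}_G$; the columns of $\mathrm{Cyc}_G$, indexed by $E$, are vectors $c_e \in \R^g$ realizing the bond matroid $M^*(G)$, so that for any $B \subseteq E$ we have $r^*(B) = \dim \langle c_e : e \in B\rangle$. For a vertex $v$ with incident edges $e_1, e_2, e_3$, the relation $c_{e_1} \pm c_{e_2} \pm c_{e_3} = 0$ (the unique dependence, using that $G$ is loopless so the three edges are distinct and their columns span a $2$-plane) exhibits the three points $[c_{e_i}] \in \P^{g-1}$ as collinear; the line $\ell_v$ through them is the projectivization of the $2$-plane $L_v := \langle c_{e_1}, c_{e_2}, c_{e_3}\rangle$. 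Then I would show that for any $A \subseteq V$, the span of the lines $\{\ell_v : v \in A\}$ in $\P^{g-1}$ is the projectivization of $\sum_{v \in A} L_v = \langle c_e : e \in \delta(A)\rangle$, whose dimension is exactly $r^*(\delta(A))$. Hence the projective span of $\bigcup_{v \in A}\ell_v$ has dimension $r^*(\delta(A)) - 1$.

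Next comes the generic hyperplane step. Since $H$ is generic, for each $v$ the point $p_v = \ell_v \cap H$ is a generic point of the line $\ell_v$, and the configuration $\{p_v : v \in A\}$ spans inside $H$ a subspace of dimension $\min\{|A| - 1,\ \dim(\text{span of the }\ell_v) \} = \min\{|A|-1,\ r^*(\delta(A)) - 1\}$; more precisely, a generic point on each line drops the span by exactly one from the span of the full lines, until the bound $|A|-1$ is reached. This is the standard fact that choosing one generic point on each of a collection of lines spanning a projective space $\P^{d}$ gives, from $k$ lines, a point set of projective dimension $\min(k-1, d)$ — one must check that genericity of $H$ indeed yields generic-enough points on all lines simultaneously, which follows because the incidence variety parametrizing $(H, (p_v)_v)$ dominates the product of the lines. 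Consequently, $A$ is independent in $M_G'$ (i.e.\ $\dim\overline{\{p_v\}} = |A| - 1$) if and only if $|A| - 1 \le r^*(\delta(A)) - 1$, i.e.\ $r^*(\delta(A)) \ge |A|$. But we need this for $A$ \emph{and all its subsets}: $A$ is independent in $M_G'$ iff every subset $A' \subseteq A$ has the expected span, iff $r^*(\delta(A')) \ge |A'|$, well, $> |A'| - 1$, for all nonempty $A' \subseteq A$ — wait, this must be reconciled with Proposition~\ref{prop_dependenceM_G}(b), which asks for $r^*(\delta(A')) > |A'|$. The resolution: by Lemma~\ref{lemma_equality_circuits}(b), for a $2$-connected trivalent graph every circuit $C$ satisfies $r^*(\delta(C)) = |C|$, so the strict-versus-non-strict discrepancy only matters at minimal dependent sets and the two independence criteria do coincide; I would verify this compatibility carefully, as it is the subtle point.

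The main obstacle I expect is precisely the genericity argument in the third paragraph: making rigorous that a single generic hyperplane $H$ simultaneously cuts each line $\ell_v$ in a point that is ``generic enough'' so that the spans of all subfamilies $\{p_v : v \in A\}$ behave as for independently generic points. The cleanest route is to observe that the map sending $H$ to the tuple $(\ell_v \cap H)_{v \in V} \in \prod_v \ell_v$ is dominant (indeed, given any target tuple of points $q_v \in \ell_v$ in general position, a hyperplane through all of them exists when $g-1$ is large enough relative to the constraints, or one argues on an open dense locus), so a generic $H$ pulls back the dense locus where all the relevant spans are maximal; combined with the rank computation $\dim\langle \ell_v : v\in A\rangle = r^*(\delta(A)) - 1$, this gives $M_G' = M_G$. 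Everything else — the column relations at trivalent vertices, the identity $\sum_{v\in A} L_v = \langle c_e : e\in\delta(A)\rangle$, and the translation through Proposition~\ref{prop_dependenceM_G} and Lemma~\ref{lemma_equality_circuits} — is bookkeeping.
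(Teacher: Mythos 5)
There is a genuine gap in the generic-hyperplane step, and it is not the ``subtle point'' you flagged at the end but the one before it. Your claim that the incidence variety of pairs $(H,(p_v)_v)$ dominates $\prod_v \ell_v$ is false by a dimension count: the space of hyperplanes in $\P^{g-1}$ has dimension $g-1$, while $\prod_v \ell_v$ has dimension $2g-2$, so the points $p_v=\ell_v\cap H$ can never behave like independently generic points on the lines. The concrete consequence is an off-by-one in your span formula: since all the $p_v$ with $v\in A$ lie in $W\cap H$, where $W=\mathrm{span}(\ell_v: v\in A)$ has projective dimension $r^*(\delta(A))-1$, and a generic $H$ cuts $W$ in dimension $r^*(\delta(A))-2$, the correct bound is $\dim\overline{\{p_v\}}\le\min\{|A|-1,\,r^*(\delta(A))-2\}$, not $\min\{|A|-1,\,r^*(\delta(A))-1\}$. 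Your criterion ``$A$ independent in $M_G'$ iff $r^*(\delta(A'))\ge|A'|$ for all $\emptyset\ne A'\subseteq A$'' therefore misclassifies exactly the circuits of $M_G$ (which by Lemma~\ref{lemma_equality_circuits}(b) satisfy $r^*(\delta(C))=|C|$) as independent. The discrepancy with Proposition~\ref{prop_dependenceM_G}(b) that you noticed is real and cannot be reconciled away: for $K_4$ (genus $3$, four points on a line $H\cong\P^1$ in $\P^2$) any $3$-element set $A$ has $r^*(\delta(A))=3=|A|$, so your test declares it independent, but $M_{K_4}=U_{2,4}$ and every $3$-set is a circuit.

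Your first two paragraphs (the column relations at trivalent vertices and the identity $\mathrm{span}(\ell_v: v\in A)=\P\langle c_e: e\in\delta(A)\rangle$, hence of dimension $r^*(\delta(A))-1$) match the paper's computation and are fine. But the paper does not attempt your hands-on genericity argument: it invokes \cite[Lemma 5.7]{geiger2022selfdual}, which characterizes circuits of $M_G'$ directly as the inclusion-minimal sets $A$ for which the span of the \emph{lines} $\{L_i: i\in A\}$ has dimension $|A|-1$ --- i.e., the lemma already packages the passage from the point configuration in $H$ back to the spans of the lines, with the correct shift. To repair your argument you would need to prove such a statement: that for generic $H$ the equality $\dim\overline{\{p_v: v\in A\}}=\min\{|A|-1,\,r^*(\delta(A))-2\}$ (or rather its inclusion-minimal refinement over subsets) actually holds, which is precisely the content you cannot get from independent genericity of the $p_v$ on their lines.
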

\begin{proof} Let $G=(V,E)$ be trivalent and 2-connected with dual graph curve $C$.
For every vertex $i \in V$, let $L_i \subseteq \P^{g-1}$ denote the line that is the image of the corresponding component of $C$ under the canonical morphism. 
Every edge $e = (i,j) \in E$ corresponds to a point $q_e \in  L_i\cap L_j \subseteq \P^{g-1}$. More precisely, the edge $e$ in $G$ corresponds to an intersection point of the components $i$ and $j$ of $C$, and $q_e$ is the image of this node under the canonical morphism.

Let $\emptyset \neq A\subseteq V$. By \cite[Lemma 5.7]{geiger2022selfdual}, $A$ is a circuit in $M_G'$ if and only if $A$ is inclusion-minimal with the property that the dimension of $\text{span}(\{ L_i \mid i \in A\}) = \text{span}(\{q_e \mid e \in \delta(A)\})$ equals $|A|-1$.
The points $q_e$ realize the cographic matroid, so
$$\dim \, \text{span}(\{q_e \mid e \in \delta(A)\})=r^*(\delta(A))-1\, .$$
Thus, $A$ is a circuit in $M_G'$ if and only if $A$ is inclusion-minimal with $r^*(\delta(A))-1=|A|-1$, that is, if and only if $A$ is a circuit in $M_G$ by Proposition~\ref{prop_dependenceM_G} and Lemma~\ref{lemma_equality_circuits}.
\end{proof}

The following theorem is a slightly stronger version of \cite[Theorem 5.5]{geiger2022selfdual}. All arguments go through, as the main tools are the Riemann-Roch Theorem and the fact that $\omega_C$ is the pull-back of $\mathcal{O}_{\P^{g-1}}(1)$, which still hold with our weaker assumptions on the graph $G$.

\begin{theorem}\label{thm_geometric_self-duality}
For a $2$-connected, trivalent graph $G$, the matroid $M_G$ is ISD. 
\end{theorem}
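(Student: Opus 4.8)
The plan is to deduce the statement from its algebro-geometric source, following \cite[Theorem 5.5]{geiger2022selfdual} and verifying that only hypotheses available for $2$-connected $G$ enter. Let $C$ be the graph curve dual to $G$; it exists because $G$ is trivalent and loopless (Lemma~\ref{lemma_k-connectedness}(a)). Since $G$ is $2$-connected, \cite[Proposition 2.5]{BE91} ensures that $\omega_C$ is base point free, so we have the canonical morphism $\phi\colon C\to\P^{g-1}$ with $\phi^*\mathcal O(1)\cong\omega_C$, where $g$ is the arithmetic genus. First I would apply Theorem~\ref{thm:graphcurve} to replace $M_G$ by the matroid $M_G'$ of the linear dependencies of the multiset $\Gamma=H\cap\phi(C)$ of $2g-2$ points cut out on a generic hyperplane $H\cong\P^{g-2}$; it then suffices to show that $M_G'$ is identically self-dual.

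Next I would prove that $\Gamma$ is a \emph{self-associated} configuration in the classical sense of Coble (see \cite{CobleAssociatedSO,dolgachev_ortland}): a generic hyperplane section of a canonically mapped curve is self-associated. The only inputs to this classical argument are Riemann--Roch and Serre duality on $C$, together with $\phi^*\mathcal O(1)\cong\omega_C$. Concretely, $H$ cuts out a divisor $D$ in the canonical class of $C$, and feeding sheaves of the shape $\omega_C(-D')$ for subdivisors $D'\le D$ into Riemann--Roch in the form $\chi(\mathcal F)=\deg\mathcal F+1-g$ produces, for a suitable choice of homogeneous representatives $v_i$ of the points of $\Gamma$, nonzero scalars $\lambda_i$ with $\sum_i\lambda_i v_i v_i^{\mathsf T}=0$, which is the defining relation of self-association. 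Because $C$ is nodal, hence Gorenstein, Riemann--Roch and Serre duality with dualizing sheaf $\omega_C$ hold verbatim; this is precisely the point at which the argument of \cite{geiger2022selfdual} survives the weakening from $3$-connected to $2$-connected.

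Finally I would invoke the standard dictionary between association and matroid duality. A Gale dual of the (spanning) configuration $\Gamma$ represents $(M_G')^*$, and self-association says exactly that a Gale dual of $\Gamma$ is projectively equivalent to $\Gamma$ after rescaling the point representatives; since neither a projective transformation nor a rescaling of representatives changes the matroid, we get $M_G'=(M_G')^*$, i.e.\ $M_G$ is identically self-dual.

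The step I expect to cause the most trouble is the bookkeeping forced by the fact that, for $G$ only $2$-connected, the canonical map $\phi$ need not be an embedding: distinct components of $C$ may map to the same line, so $\Gamma$ is a genuine multiset and can contain repeated points. One has to make sure this non-reduced behaviour is harmless --- that a generic $H$ still has the required transversality, that the Riemann--Roch computation behind self-association is unaffected, and that repeated points correspond correctly to parallel, respectively series, pairs on the two sides of matroid duality. Checking that the proof of \cite[Theorem 5.5]{geiger2022selfdual} is robust to this is the only real work beyond quoting that theorem together with Theorem~\ref{thm:graphcurve}.
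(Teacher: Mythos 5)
Your proposal is correct and follows essentially the same route as the paper: the paper's own justification is exactly the remark that \cite[Theorem 5.5]{geiger2022selfdual} carries over because the only inputs are the Riemann--Roch Theorem and the fact that $\omega_C$ is the pull-back of $\mathcal{O}(1)$, both of which survive the weakening from $3$-connected to $2$-connected. You spell out in more detail (via self-association and Gale duality, and the bookkeeping for repeated points when the canonical map is not an embedding) what the paper leaves implicit, but the argument is the same.
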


In Section \ref{section_selfdual}, we will give a purely combinatorial proof that $M_G$ is identically self-dual.


\begin{example}\label{ex_soda}
Consider the \emph{soda can graph} in Figure \ref{fig:soda_can}. The corresponding graph curve $C$ has four components intersecting as in Figure \ref{fig:graph_curve}. Each pair of (abstract) lines in $C$ that intersect in two points must be mapped to the same line in $\P^{g-1}=\P^2$ by the canonical morphism. Hence, the image of $C$ under the canonical morphism consists of two (double) lines intersecting transversely. Slicing with a generic hyperplane in $\P^2$ yields two double points. Thus, the graph curve matroid equals $U_{1,2}\oplus U_{1,2}$, which is identically self-dual.
\end{example}

\section{Circuits in \texorpdfstring{$M_G$}{MG}}\label{section_circuits}
Throughout this section, we fix a trivalent, $2$-connected graph $G=(V,E)$. Note that by Lemma \ref{lemma_k-connectedness}, $G$ cannot have loops, but we allow $G$ to have parallel edges. 
We say that $C = \{v_1,\dots,v_r\} \subseteq V$ is a \emph{cycle} if, after relabeling the vertices if necessary, there is an edge between $v_i$ and $v_{i+1}$ for every $1 \leq i \leq r$, where $v_{r+1} = v_1$. If $r=2$, we require these edges to be distinct, i.e., there is a pair of parallels between $v_1$ and $v_2$. Finally,  a vertex subset of a graph is \emph{cyclic} if the induced subgraph contains at least one cycle, and \emph{acyclic} otherwise.

This section provides a complete combinatorial description of the circuits of $M_G$ from the graph $G$, as given in Proposition~\ref{prop_circuitsInM_G}. This description will be very useful for the main result of Section~\ref{section_selfdual}. Recall that for a subset $A\subseteq V$ of vertices, $\omega(A)$ refers to the number of components of the induced subgraph.

\begin{proposition}\label{prop_circuitsInM_G}
A non-empty subset $A \subseteq V$ is a circuit in $M_G$ if and only if either:
\begin{itemize}
\item $A$ is a cycle, or 
\item $A$ is acyclic and $\omega(A) +1 = \omega(V-A)$,
\end{itemize}
and $A$ contains no proper non-empty subset for which either property holds.
\end{proposition}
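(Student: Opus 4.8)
The plan is to translate the matroid‑theoretic condition $r^*(\delta(A)) \le |A|$ of Definition~\ref{def:matroid_graphcurve} into a statement about the components and cycles of $G$, and then read off the circuits. Write $m(A) := |E(G[A])|$ for the number of edges of the induced subgraph on $A$. Combining the dual‑rank formula recalled in Section~\ref{section_matroids} with the facts that $r(E) = |V|-1$ (since $G$ is connected), that $E \setminus \delta(A) = E(G[V-A])$, so the spanning subgraph $(V, E\setminus\delta(A))$ has exactly $|A| + \omega(V-A)$ components and hence $r(E\setminus\delta(A)) = |V| - |A| - \omega(V-A)$, and that $|\delta(A)| = 3|A| - m(A)$ by the handshake identity (valid because $G$ is trivalent and, by Lemma~\ref{lemma_k-connectedness}, loopless), one obtains
$$r^*(\delta(A)) \;=\; 2|A| - m(A) + 1 - \omega(V-A).$$
Thus $r^*(\delta(A)) \le |A|$ is equivalent to $|A| - m(A) + 1 \le \omega(V-A)$, and by Lemma~\ref{lemma_equality_circuits}(b) every circuit even satisfies $|A| - m(A) + 1 = \omega(V-A)$.

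Next I would record two sub‑claims. \textbf{(B):} if $A$ is a cycle, or if $A$ is acyclic with $\omega(A)+1 = \omega(V-A)$, then $r^*(\delta(A)) \le |A|$. When $A$ is a cycle, $m(A) \ge |A|$, so the displayed formula gives $r^*(\delta(A)) \le |A| + 1 - \omega(V-A) \le |A|$ as soon as $A \ne V$; the degenerate case $A = V$ is the direct estimate $r^*(E) = |E| - (|V|-1) = \tfrac12|V| + 1 \le |V|$. When $A$ is acyclic, $m(A) = |A| - \omega(A)$, and the formula gives $r^*(\delta(A)) = |A| + \omega(A) + 1 - \omega(V-A) = |A|$. \textbf{(A):} every circuit $A$ of $M_G$ is a cycle, or is acyclic with $\omega(A)+1 = \omega(V-A)$. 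Indeed, if $G[A]$ contains a cycle, then its vertex set $C \subseteq A$ is a cycle in the sense of Section~\ref{section_circuits}, so $r^*(\delta(C)) \le |C|$ by (B), and minimality of the circuit $A$ forces $C = A$; if $G[A]$ is acyclic, then $m(A) = |A| - \omega(A)$, and $r^*(\delta(A)) = |A|$ from Lemma~\ref{lemma_equality_circuits}(b) forces $\omega(A)+1 = \omega(V-A)$.

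Finally I would glue the pieces. Let $P(X)$ denote the property ``$X$ is a cycle, or $X$ is acyclic with $\omega(X)+1 = \omega(V-X)$''. If $A$ is a circuit, then $P(A)$ holds by (A), and no non-empty $A' \subsetneq A$ can satisfy $P(A')$, since by (B) such an $A'$ would violate the minimality of $A$. Conversely, if $P(A)$ holds and no non-empty proper subset of $A$ satisfies $P$, then $r^*(\delta(A)) \le |A|$ by (B), so $A$ contains a circuit $A_0$; by (A), $P(A_0)$ holds, and the hypothesis forces $A_0 = A$, so $A$ is a circuit. This is exactly the claimed equivalence.

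Essentially everything here is routine graph bookkeeping, and the rank computation of the first paragraph is the real engine. The point I would be most careful about is that the minimality clause in the statement is phrased via the combinatorial property $P$ rather than via the matroid condition, so one must know that every matroid circuit sits inside every set satisfying $r^*(\delta(\cdot)) \le |\cdot|$ and already has property $P$ — this is sub‑claim (A), whose acyclic case genuinely needs the \emph{equality} $r^*(\delta(A)) = |A|$ for circuits from Lemma~\ref{lemma_equality_circuits}(b), not merely the defining inequality (which would only yield $\omega(A)+1 \le \omega(V-A)$). A minor additional subtlety is the degenerate case $A = V$ in sub‑claim (B), treated separately above.
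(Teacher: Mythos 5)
Your proof is correct and follows essentially the same route as the paper: the rank formula $r^*(\delta(A)) = |\delta(A)| - |A| - \omega(V-A) + 1$ (the paper's Lemma~\ref{lemma_rank_bondmatroid}), the split into cyclic and acyclic cases (the paper's Lemma~\ref{lemma:cyclic-dependent} and Proposition~\ref{prop_circuitComponents}), and the use of Lemma~\ref{lemma_equality_circuits}(b) to upgrade $\omega(A)+1\le\omega(V-A)$ to an equality for acyclic circuits. Your unified identity $|\delta(A)| = 3|A| - m(A)$ and the explicit treatment of the degenerate case $A=V$ are minor streamlinings of the same argument.
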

To prove this, we first need to understand how to compute $r^*(\delta(A))$ for any $A\subseteq V$.
\begin{lemma}\label{lemma_rank_bondmatroid}
For every $A\subseteq V$, we have 
$$r^*(\delta(A))= |\delta(A)|-|A|-\omega(V-A)+1.$$
In particular, if $A$ is acyclic, then 
$$r^*(\delta(A))= |A| + \omega(A)-\omega(V-A)+1.$$
\end{lemma}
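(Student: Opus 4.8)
The plan is to reduce everything to the standard rank formula for a dual matroid, applied here to the graphic matroid $M(G)$ and its dual $M^*(G)$. The duality formula recalled at the beginning of Section~\ref{section_matroids}, applied to the edge subset $\delta(A)\subseteq E$, gives
$$r^*(\delta(A)) = r(E-\delta(A)) + |\delta(A)| - r(E).$$
Since $G$ is connected, $r(E)=|V|-1$. The key observation is that $E-\delta(A)$ consists exactly of the edges of $G$ with both endpoints in $V-A$, i.e.\ the edge set of the induced subgraph $G[V-A]$. Hence the spanning subgraph $(V, E-\delta(A))$ is the disjoint union of $G[V-A]$ with $|A|$ isolated vertices (the vertices of $A$), and therefore has $\omega(V-A)+|A|$ connected components. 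This yields $r(E-\delta(A)) = |V|-|A|-\omega(V-A)$. Substituting into the displayed equation and cancelling $|V|$ gives the first formula
$$r^*(\delta(A)) = |\delta(A)| - |A| - \omega(V-A) + 1.$$

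For the ``in particular'' statement, I would compute $|\delta(A)|$ using trivalence. Write $E(A)$ for the edge set of $G[A]$. Summing the degrees of the vertices of $A$ counts each edge of $E(A)$ twice and each edge of $\delta(A)\setminus E(A)$ once; since every vertex has degree $3$, this gives $3|A| = 2|E(A)| + (|\delta(A)|-|E(A)|)$, hence $|\delta(A)| = 3|A| - |E(A)|$. If $A$ is acyclic, then $G[A]$ is a forest with $|A|$ vertices and $\omega(A)$ components, so $|E(A)| = |A|-\omega(A)$, and therefore $|\delta(A)| = 2|A|+\omega(A)$. Plugging this into the first formula gives $r^*(\delta(A)) = |A| + \omega(A) - \omega(V-A) + 1$.

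The one point requiring care — and the step I would flag — is the bookkeeping of connected components of the spanning subgraph $(V, E-\delta(A))$: one must remember that the vertices of $A$ survive as isolated vertices there and each contributes its own component, rather than simply counting the components of $G[V-A]$. Checking the extreme case $A=V$ (where $\omega(V-A)=0$ and the formula reduces to the correct $r^*(E) = |E|-|V|+1$ for a connected graph) and small forests confirms the normalization. Beyond this, no genuine obstacle arises; the statement is a direct unwinding of matroid duality together with a degree count valid because $G$ is trivalent.
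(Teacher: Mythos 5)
Your proof is correct and follows the same route as the paper: the paper simply cites Oxley's dual-rank formula and the rank formula for graphic matroids where you spell out the computation of $r(E-\delta(A))$ via the component count of the spanning subgraph, and both use the identical trivalence/forest count $|\delta(A)|=2|A|+\omega(A)$ for the acyclic case.
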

\begin{proof}
Applying \eqref{eqn:rank-fcn-graphic-matroid}, we get that $r(E) = |V|-1$ and 
$$r(E - \delta(A)) = |V(G[E - \delta(A)])| - \omega(E - \delta(A)) = |V(G[V-A])| - \omega(V - A),$$
where the last equality holds because $G[V-A]$ equals $G[E - \delta(A)]$ together with an isolated node for each $v \in V- A$ which, in $G$, neighbors only vertices in $A$. The first statement of the lemma then follows from \eqref{eqn:rank-fcn-dual-matroid}.
If $A$ is acyclic, then trivalence implies $|\delta(A)|=2\cdot|A|+\omega(A)$.
\end{proof}

We remark that the first equation in Lemma \ref{lemma_rank_bondmatroid} also holds for any connected graph, not just for trivalent, $2$-connected ones. Applying Lemmas \ref{lemma_equality_circuits} and \ref{lemma_rank_bondmatroid}, the following is a restatement of Proposition \ref{prop_dependenceM_G}: 

\begin{proposition}\label{prop_circuitComponents}
Let $A\subseteq V$ be acyclic. Then $A$ is \dots
\begin{enumerate}[(a)]
\item \dots dependent if and only if it contains a subset $\emptyset\neq A'\subseteq A$ with
$$\omega(A') < \omega(V-A').$$ 
Moreover, if $A$ is a circuit, then 
$$\omega(A) +1 = \omega(V-A)\, .$$
\item \dots independent if and only if for all subsets $\emptyset \neq A'\subseteq A$ we have
$$\omega(A')\geq \omega(V-A')\, .$$
\end{enumerate}
\end{proposition}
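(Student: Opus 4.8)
The plan is to prove Proposition~\ref{prop_circuitComponents} by translating Proposition~\ref{prop_dependenceM_G} through the rank formula of Lemma~\ref{lemma_rank_bondmatroid}, being careful about the acyclicity hypothesis. The key observation is that for an \emph{acyclic} subset $A' \subseteq V$, Lemma~\ref{lemma_rank_bondmatroid} gives $r^*(\delta(A')) = |A'| + \omega(A') - \omega(V - A') + 1$, so the defining inequality $r^*(\delta(A')) \leq |A'|$ from Definition~\ref{def:matroid_graphcurve} becomes simply $\omega(A') - \omega(V-A') + 1 \leq 0$, i.e.\ $\omega(A') < \omega(V-A')$ (using that both sides are integers).

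First I would handle part (b). By Proposition~\ref{prop_dependenceM_G}(b), $A$ is independent in $M_G$ if and only if $r^*(\delta(A')) > |A'|$ for every non-empty $A' \subseteq A$. Here is the one subtlety: Proposition~\ref{prop_dependenceM_G}(b) quantifies over \emph{all} non-empty $A' \subseteq A$, not just the acyclic ones. But every subset of an acyclic set is acyclic, so since $A$ is acyclic, every $A' \subseteq A$ is acyclic, and the rewriting above applies uniformly: $r^*(\delta(A')) > |A'|$ is equivalent to $\omega(A') \geq \omega(V - A')$. Thus $A$ is independent iff $\omega(A') \geq \omega(V-A')$ for all non-empty $A' \subseteq A$, which is exactly (b). Part (a)'s first sentence is then the negation: $A$ is dependent iff it contains some non-empty $A'$ with $\omega(A') < \omega(V - A')$.

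Next I would prove the ``Moreover'' clause of (a): if the acyclic set $A$ is a circuit, then $\omega(A) + 1 = \omega(V - A)$. Since a circuit is in particular dependent, we may take $A' = A$ itself in the minimality characterization — more precisely, a circuit satisfies $r^*(\delta(A)) \leq |A|$ by Definition~\ref{def:matroid_graphcurve}, and by Lemma~\ref{lemma_equality_circuits}(b) (applicable since $G$ is $2$-connected trivalent) we in fact have equality $r^*(\delta(A)) = |A|$. Plugging this into the acyclic rank formula $r^*(\delta(A)) = |A| + \omega(A) - \omega(V-A) + 1 = |A|$ gives $\omega(A) - \omega(V-A) + 1 = 0$, i.e.\ $\omega(A) + 1 = \omega(V-A)$, as claimed.

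I do not expect any serious obstacle here — this proposition is essentially a bookkeeping exercise. The one point requiring genuine care, as noted, is not to conflate ``the statement restricted to acyclic subsets'' with ``the statement for all subsets'': it works out only because acyclicity is inherited by subsets, so the acyclic rank formula of Lemma~\ref{lemma_rank_bondmatroid} is valid for every $A' \subseteq A$. The only external inputs are Proposition~\ref{prop_dependenceM_G}, Lemma~\ref{lemma_rank_bondmatroid}, and Lemma~\ref{lemma_equality_circuits}(b) (for the equality $r^*(\delta(A)) = |A|$ on circuits), all already in hand.
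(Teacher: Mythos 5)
Your proposal is correct and follows exactly the route the paper intends: the paper presents this proposition as a direct restatement of Proposition~\ref{prop_dependenceM_G} obtained by substituting the acyclic case of Lemma~\ref{lemma_rank_bondmatroid} and invoking Lemma~\ref{lemma_equality_circuits}(b) for the circuit equality. Your added care about acyclicity being inherited by subsets (so the rank formula applies to every $A'\subseteq A$) is precisely the point that makes the translation uniform, and the integer-inequality manipulations are all correct.
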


\begin{example}
Consider the double house graph $G$ in Figure \ref{fig:double-house-graph}. The vertex subset $A = \{2,3,5,6,7\}$ is dependent in $M_G$ since it is acyclic, connected, and disconnects $G$. 
Moreover, every proper subset $A' \subsetneq A$ is independent because $\omega(A') \geq \omega(V-A')$, so $A$ is a circuit.
We also note that circuits of a graph curve matroid need not be connected. For example, $B = \{1,2,5,6,8\}$ is a circuit in $M_G$ and $G[B]$ is disconnected. 
\end{example}

\begin{lemma}\label{lemma:cyclic-dependent}
Any cyclic subset is dependent in $M_G$.
\end{lemma}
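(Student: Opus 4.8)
The plan is to locate, inside any cyclic set $A$, a non-empty subset $A'$ that witnesses dependence via Proposition~\ref{prop_dependenceM_G}(a); the obvious candidate is the vertex set of a cycle contained in $G[A]$.

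Concretely, I would first fix a cycle of $G[A]$ and let $C=\{v_1,\dots,v_r\}\subseteq A$ be its set of vertices. Since $G$ is loopless by Lemma~\ref{lemma_k-connectedness}(a), we have $r=|C|\ge 2$, and $G[C]$ contains the $r$ edges of the cycle, so the number $e(C)$ of edges of $G[C]$ satisfies $e(C)\ge |C|$. Trivalence then yields the standard boundary count $3|C|=2e(C)+|\delta(C)\setminus E(G[C])|$, hence $|\delta(C)|=3|C|-e(C)\le 2|C|$.

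Next I would feed this into Lemma~\ref{lemma_rank_bondmatroid}, which gives $r^*(\delta(C))=|\delta(C)|-|C|-\omega(V-C)+1$. If $C\neq V$, then $\omega(V-C)\ge 1$, so $r^*(\delta(C))\le 2|C|-|C|-1+1=|C|$. The remaining case $C=V$ is degenerate, since then $\omega(V-C)=0$: here $\delta(C)=E$, so $r^*(\delta(C))=r^*(E)=|E|-r(E)=|E|-|V|+1$, and trivalence together with connectedness give $2|E|=3|V|$, whence $r^*(\delta(C))=\tfrac12|V|+1\le |V|=|C|$ using $|V|\ge 2$. In either case $\emptyset\neq C\subseteq A$ and $r^*(\delta(C))\le |C|$, so $A$ is dependent in $M_G$ by Proposition~\ref{prop_dependenceM_G}(a).

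All of this is elementary bookkeeping; the only point requiring attention is the degenerate case $C=V$, where the generic inequality $\omega(V-C)\ge 1$ is unavailable and one must argue separately via the edge count $|E|=\tfrac32|V|$. I anticipate no genuine obstacle.
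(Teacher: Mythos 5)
Your proof is correct and follows essentially the same route as the paper's: pass to the vertex set $C$ of a cycle inside $A$, bound $|\delta(C)|\le 2|C|$ using trivalence, and conclude via Lemma~\ref{lemma_rank_bondmatroid} and Proposition~\ref{prop_dependenceM_G}(a). The only cosmetic difference is that the paper takes $C$ inclusion-minimal (so that $|\delta(C)|=2|C|$ exactly) and dispatches the theta graph as a separate case, whereas you work with an arbitrary cycle and handle the degenerate case $C=V$ by a direct edge count --- the two treatments cover the same corner case.
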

\begin{proof}
Let $A\subset V$ be an inclusion-minimal cycle. If $G$ is the theta graph (Figure \ref{fig:theta}), the statement is clear. Otherwise, $A\subsetneq V$ and $|\delta(A)|= 2\cdot|A|$. By Lemma \ref{lemma_rank_bondmatroid}, we get 
$$r^*(\delta(A))=2\cdot|A|-|A|-\omega(V-A)+1\leq |A|,$$ 
so $A$ is dependent. Thus, any cyclic subset of $V$ must also be dependent.
\end{proof}

Proposition \ref{prop_circuitsInM_G} now follows immediately from Proposition \ref{prop_circuitComponents} and Lemma \ref{lemma:cyclic-dependent}. 

By Lemma~\ref{lemma:cyclic-dependent}, if a cyclic subset $A \subseteq V$ gives a circuit in $M_G$, then $A$ must necessarily be an inclusion-minimal cycle. The following result provides a sufficient condition. 
\begin{proposition}
An inclusion-minimal cycle $A \subseteq V$ is a circuit in $M_G$ if and only if $A$ does not disconnect the graph $G$.
\end{proposition}
\begin{proof}
Let us assume $G$ is not the theta graph (Figure \ref{fig:theta}), since the unique cycle of the theta graph does not disconnect it.

Let $A$ be a circuit in $M_G$, $v \in A$, and $A' = A - \{v\}$. Since $A'$ is acyclic, connected, and independent in $M_G$, applying Proposition~\ref{prop_circuitComponents} (b) yields
$$1 = \omega(A') \geq \omega(V - A').$$
Thus, $G[V - A']$ is connected. Since the vertex $v$ has degree $1$ in $G[V - A']$, the graph $G[V-A]$ must also be connected. 

Conversely, assume that $G[V - A]$ is connected. Every non-empty subset $A' \subsetneq A$ is acyclic. Moreover, $G[V-A']$ is connected because each vertex in $A -A'$ is adjacent to $G[V - A]$. Hence, we have that $\omega(A') \geq \omega(V - A') = 1,$ so $A'$ is independent in $M_G$ by Proposition~\ref{prop_circuitComponents}.
\end{proof}

\section{Self-Duality}\label{section_selfdual}
In \cite{geiger2022selfdual}, the ISD property of matroids coming from graph curves was argued via the Riemann-Roch Theorem. Here, we present a combinatorial proof. 

In this section, $G=(V,E)$ always denotes a trivalent, $2$-connected graph.
For us, the \emph{genus} of a graph $G$ is equal to its first Betti number, i.e., the rank of its cycle space. 
The graph curve matroid $M_G$ of a graph $G$ of genus $g$
has a ground set of cardinality $|V|=2g-2$.
To prove that $M_G$ is ISD, it therefore suffices to show that $M_G$ has rank $g-1$ and that the complement of any basis is independent (hence also a basis).

To accomplish the first step, we construct a set of rank $g-1$ in $M_G$ by starting with any vertex of $G$ and inductively adding adjacent vertices such that the rank in $M_G$ increases sufficiently often. Crucial for this is the following lemma:

\begin{lemma}\label{lemma_rank_increase}
If $A\subseteq V$ and $v\in V-A$ is a vertex contained in a cycle of $G[V-A]$, then $r_{M_G}(A\cup \{v\})>r_{M_G}(A)$. In particular, if $A$ is a basis of $M_G$, then $V-A$ must be acyclic.
\end{lemma}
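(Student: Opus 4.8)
The plan is to argue by closures. For $v\in V-A$ one has $r_{M_G}(A\cup\{v\})=r_{M_G}(A)$ if and only if $v\in\mathrm{cl}_{M_G}(A)$, which holds if and only if $M_G$ has a circuit $C$ with $v\in C\subseteq A\cup\{v\}$. So I would assume such a circuit $C$ exists and derive a contradiction from the hypothesis that $v$ lies on a cycle $Z$ of $G[V-A]$. The single structural fact I extract at the outset: the cycle $Z$ uses two distinct edges at $v$, and their other endpoints lie in $(V-A)\setminus\{v\}$; since $C\setminus\{v\}\subseteq A$, these endpoints lie outside $C$, so these are two distinct edges from $v$ into $V-C$. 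As $v$ has degree $3$, at most one edge leaves $v$ into $C$.

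By Proposition~\ref{prop_circuitsInM_G} the circuit $C$ is either a cycle of $G$ or is acyclic. The cyclic case is immediate: if $C$ is a cycle of $G$, then $v$ is incident to two distinct cycle-edges of $C$, whose other endpoints lie in $C$; together with the two edges into $(V-A)\setminus\{v\}$ found above, whose endpoints lie outside $C$ and which are therefore different edges, this gives four distinct edges at $v$, contradicting trivalence.

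For the acyclic case I would set $C':=C\setminus\{v\}=C\cap A$, which is a nonempty (since $M_G$ is loopless by Lemma~\ref{lemma_equality_circuits}(a)), independent, acyclic set, and then track two component counts as $v$ is attached. Let $d_1$ be the number of edges from $v$ into $C$, so $d_1\le 1$ and $d_2:=3-d_1\ge 2$ is the number of edges from $v$ into $V-C$. Attaching $v$ to $G[C']$ gives $\omega(C)=\omega(C')+1-d_1$, and attaching $v$ to $G[V-C]$ gives $\omega(V-C')=\omega(V-C)+1-b$, where $b$ is the number of connected components of $G[V-C]$ that meet $v$. The key point is that the two $Z$-edges at $v$ reach one common component of $G[V-C]$, because the path $Z-v$ joining their endpoints lies entirely inside $(V-A)\setminus\{v\}\subseteq V-C$; hence $b\le 1+(d_2-2)=d_2-1$. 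Substituting both identities into the circuit equality $\omega(C)+1=\omega(V-C)$ of Proposition~\ref{prop_circuitComponents}(a) gives $\omega(C')-\omega(V-C')=b+d_1-3\le (d_2-1)+d_1-3=-1$, which contradicts $\omega(C')\ge\omega(V-C')$ from Proposition~\ref{prop_circuitComponents}(b). This contradiction proves $r_{M_G}(A\cup\{v\})>r_{M_G}(A)$, and the ``in particular'' follows at once: if $A$ were a basis with $V-A$ cyclic, choosing $v$ on a cycle of $G[V-A]$ would force $r_{M_G}(A\cup\{v\})>r_{M_G}(A)=r(M_G)$, which is impossible.

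The main obstacle I anticipate is the bookkeeping in the acyclic case: one must use that $\deg(v)=3$ precisely, verify that the two cycle-edges at $v$ in $G[V-A]$ really do land in a common component of $G[V-C]$ (this is exactly where one needs the whole cycle $Z$, not just two edges at $v$), and make sure that parallel edges — and the degenerate possibility that $Z$ is a digon at $v$ — do not corrupt the component counts $\omega(C)=\omega(C')+1-d_1$ and $\omega(V-C')=\omega(V-C)+1-b$. Once Propositions~\ref{prop_circuitsInM_G} and~\ref{prop_circuitComponents} are available, the rest is a one-line inequality.
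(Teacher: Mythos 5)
Your proof is correct, and it realizes exactly the route the paper indicates (the paper gives no written proof of this lemma, only the remark that it ``can be proven by applying Proposition~\ref{prop_circuitComponents}''): you reduce to a putative circuit $C\subseteq A\cup\{v\}$ through $v$, dispose of the cyclic case by trivalence, and in the acyclic case combine the component-count identities with Proposition~\ref{prop_circuitComponents}. The bookkeeping with $d_1$, $b$, and the digon case all checks out, so there is nothing to add.
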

\begin{proof}
Let $I\subseteq A$ be a maximal independent subset of $A$ in $M_G$. We claim that $I\cup \{v\}$ is also independent. Surely, $I\cup \{v\}$ is acyclic, because $v$ is trivalent and at least two of its incident edges go to vertices of $V-A \subseteq V-I$. 
By Proposition~\ref{prop_circuitComponents}~(b) and the fact that $M_G$ is loopless, it suffices to show that $$\omega(I' \cup \{v\}) \geq \omega(V-(I'\cup \{v\}))$$ 
for all non-empty subsets $I' \subseteq I$.
As $I$ is independent in $M_G$, we have that $\omega(I') \geq \omega(V-I')$.
There are two cases: if $v$ is adjacent to $I'$, then $\omega(I'\cup \{v\})= \omega(I')$ and $\omega(V-(I'\cup \{v\}))=\omega(V-I')$, because $v$ is contained in a cycle and is bivalent in $G[V-I']$. If $v$ is not adjacent to $I'$, then
$\omega(I'\cup \{v\})=\omega(I')+1$ and $\omega(V-(I'\cup \{v\})) \leq \omega(V-I')+1$.
In both cases, the desired inequality holds.
\end{proof}

\begin{lemma}\label{lemma:rank-increse-one-adjacent-vertex}
Let $A \subseteq V$ and $v \in V - A$ a vertex adjacent to $A$.
\begin{enumerate}[(a)]
\item We have $r^*(\delta(A\cup \{v\})) \leq r^*(\delta(A))+1 $.
\item If $r^*(\delta(A\cup \{v\}))=r^*(\delta(A))+1$, then $r_{M_G}(A\cup\{v\}) > r_{M_G}(A).$
\end{enumerate}
\end{lemma}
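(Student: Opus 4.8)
The plan is to reduce both parts to the combinatorial rank formula of Lemma~\ref{lemma_rank_bondmatroid}, so that everything becomes bookkeeping with the connected-component counts $\omega(\cdot)$ and the edge-boundary sizes $|\delta(\cdot)|$. For part (a), write $A' = A \cup \{v\}$. Since $v$ is adjacent to $A$ and has degree $3$, adding $v$ removes at least one edge from the boundary and adds at most two, so $|\delta(A')| \le |\delta(A)| + 1$; more precisely, if $v$ has $k \in \{1,2,3\}$ neighbors in $A$ (counting parallel edges), then $|\delta(A')| = |\delta(A)| + 3 - 2k$. On the other side, $A$ is connected and $v$ is adjacent to it, so $G[A']$ is connected; and deleting the vertex $v$ from $G[V-A]$ can only split one component into several or delete an isolated component, but in the reverse direction $\omega(V-A') = \omega((V-A) - \{v\})$, which relates to $\omega(V-A)$ by: removing $v$ from its component either leaves that component connected, or breaks it into up to $3$ pieces, or (if $v$ was isolated in $G[V-A]$) decreases the count by one. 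Feeding these into $r^*(\delta(\cdot)) = |\delta(\cdot)| - |\cdot| - \omega(V - \cdot) + 1$ and using $|A'| = |A|+1$, one checks case by case on $k$ (and on whether $v$ is a cut vertex of its component) that the net change $r^*(\delta(A')) - r^*(\delta(A)) = (3-2k) - 1 - (\omega(V-A') - \omega(V-A))$ is always $\le 1$. The worst case is $k=1$ with $v$ not a cut vertex, giving exactly $+1$; every other case gives $\le 0$.

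For part (b), I would trace through which case of the above computation produces equality, and then invoke Lemma~\ref{lemma_rank_increase} or Proposition~\ref{prop_circuitComponents} directly. The cleaner route: suppose for contradiction that $r_{M_G}(A \cup \{v\}) = r_{M_G}(A)$, i.e.\ $v$ lies in the closure of $A$ in $M_G$, so $A \cup \{v\}$ contains a circuit $C \ni v$ with $C \subseteq A \cup \{v\}$, hence $C - \{v\} \subseteq A$. By Lemma~\ref{lemma_equality_circuits}(b), $r^*(\delta(C)) = |C|$. Now compare $\delta(C)$ with $\delta(C - \{v\})$ using the same trivalence accounting as in part (a): the hypothesis $r^*(\delta(A\cup\{v\})) = r^*(\delta(A)) + 1$ forces $v$ to have exactly one neighbor in $A$ that is "relevant" and not to merge components in a rank-wasting way, and I would show this same local picture applies with $C$ in place of $A$, yielding $r^*(\delta(C)) = r^*(\delta(C-\{v\})) + 1$ together with $r^*(\delta(C-\{v\})) \ge |C - \{v\}|$ unless $C - \{v\}$ is empty — in all cases contradicting $r^*(\delta(C)) = |C|$, i.e.\ contradicting minimality/dependence of the circuit. (If $C = \{v\}$, then $v$ is a loop, impossible since $G$ is $2$-connected by Lemma~\ref{lemma_equality_circuits}(a).)

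The main obstacle I anticipate is the case analysis on the number of neighbors of $v$ in $A$ combined with whether $v$ is a cut vertex of its component of $G[V-A]$ — these interact, since, e.g., a vertex with two neighbors in $A$ and one outside cannot be a cut vertex of $G[V-A]$ at all, while a vertex with one neighbor in $A$ can be. Keeping the component-count changes $\omega(V - A') - \omega(V-A)$ correctly signed in each subcase, and making sure the "equality in (a)" case is pinned down tightly enough to drive the contradiction in (b), is where the real care is needed; the algebra itself is just substitution into the formula of Lemma~\ref{lemma_rank_bondmatroid}.
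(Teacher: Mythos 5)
Your strategy for part (a) --- a case analysis on the number $k$ of edges from $v$ into $A$, substituted into the rank formula of Lemma~\ref{lemma_rank_bondmatroid} --- is exactly the paper's, but your edge-boundary bookkeeping uses the wrong definition of $\delta$. In this paper $\delta(A)$ is the set of \emph{all} edges incident to at least one vertex of $A$ (including edges with both endpoints in $A$), not the edge cut. Hence $\delta(A)\subseteq\delta(A\cup\{v\})$, no edge is ever ``removed from the boundary,'' and $|\delta(A\cup\{v\})|=|\delta(A)|+3-k$, not $|\delta(A)|+3-2k$. This matters: with your formula the net change $(3-2k)-1-\bigl(\omega(V-(A\cup\{v\}))-\omega(V-A)\bigr)$ is $\le 0$ for every $k\ge 1$ (since the $\omega$-difference is $\ge 0$ when $k=1$), which would ``show'' that $r^*(\delta(\cdot))$ never increases along a connected chain --- false, and inconsistent with your own closing claim that the worst case gives exactly $+1$. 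With the correct count $3-k$ the computation does close: the change is $2-k-\bigl(\omega(V-(A\cup\{v\}))-\omega(V-A)\bigr)$, which equals $1$ precisely when $k=1$ and the component count is unchanged, and is $\le 0$ otherwise; this is the paper's argument.

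For part (b) the paper argues directly: equality in (a) forces $k=1$ and $\omega(V-(A\cup\{v\}))=\omega(V-A)$, so the two neighbors of $v$ in $G[V-A]$ stay connected after deleting $v$, giving a cycle through $v$ in $G[V-A]$, and Lemma~\ref{lemma_rank_increase} finishes. Your contrapositive via a circuit $v\in C\subseteq A\cup\{v\}$ is a legitimate different starting point, but the step you defer --- ``the same local picture applies with $C$ in place of $A$, yielding $r^*(\delta(C))=r^*(\delta(C-\{v\}))+1$'' --- cannot be carried out, because that equality is false. Indeed $r^*(\delta(C))=|C|$ by Lemma~\ref{lemma_equality_circuits}, while independence of the nonempty proper subset $C-\{v\}$ gives $r^*(\delta(C-\{v\}))\ge |C|$ by Proposition~\ref{prop_dependenceM_G}, and monotonicity gives $r^*(\delta(C-\{v\}))\le r^*(\delta(C))$; so in fact $r^*(\delta(C))=r^*(\delta(C-\{v\}))$. (Moreover $C-\{v\}$ need not be connected and $v$ may have fewer edges to $C-\{v\}$ than to $A$, so the local analysis from (a) does not transfer.) The \emph{true} equality is what you should use: it says every edge of $\delta(v)$ lies in the $M^*(G)$-closure of $\delta(C-\{v\})\subseteq\delta(A)$, whence $r^*(\delta(A)\cup\delta(v))=r^*(\delta(A))$, contradicting the hypothesis of (b). So your route for (b) can be repaired into a genuinely different (closure-based) proof, but only after inverting the key identity you planned to establish; as written, both parts have genuine gaps.
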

\begin{proof}

(a) We first show that if $|\delta(A)\cap\delta(v)|\in \{2,3\}$, then $r^*(\delta(A\cup \{v\}))=r^*(\delta(A))$. If $v$ has two edges incident to $A$, then $\omega(V-(A\cup \{v\}))=\omega(V-A)$. 
If all three edges of $v$ are incident to vertices in $A$, then 
$\omega(V-(A\cup \{v\}))=\omega(V-A)-1$, since $v$ is isolated in $G[V-A]$. In both cases, Lemma~\ref{lemma_rank_bondmatroid} tells us that $r^*(\delta(A\cup \{v\}))= r^*(\delta(A)).$

Then, we consider the case when exactly one edge of $v$ is incident to $A$. In this case, 
\reqnomode
\begin{align}\label{eq_1}
\omega(V-(A\cup \{v\})) \geq \omega(V-A).
\end{align}
Thus, by Lemma~\ref{lemma_rank_bondmatroid}, we obtain
\begin{align}\label{eq_2}
r^*(\delta(A\cup \{v\})) &= |\delta(A\cup \{v\})| -|A\cup \{v\}|-\omega(V-(A\cup \{v\}))+1 \nonumber\\
&\leq |\delta(A)|+2-|A|-1-\omega(V-A)+1\\
&= r^*(\delta(A))+1.\nonumber
\end{align}

(b) By Lemma~\ref{lemma_rank_increase}, it suffices to show that $v$ is contained in a cycle of $G[V-A]$. 
Equality in part (a) can only hold if we have equality in \eqref{eq_2}, hence in \eqref{eq_1}.
Let $w_1,w_2\in V-A$ be the unique (not necessarily distinct) vertices adjacent to $v$. As $\omega(V-(A\cup\{v\})=\omega(V-A)$ and $w_1,w_2$ are in the same component in $G[V-A]$ connected by $v$, they must also be in the same component in $G[V-(A\cup \{v\})]$. Hence, there is a simple path through vertices in $V-(A\cup\{v\})$ from $w_1$ to $w_2$. The union of this path with $v$ forms a cycle in $G[V-A]$.
\end{proof}

\begin{proposition}\label{prop_cardinality_basis}
The rank of $M_G$ is $g-1$.
\end{proposition}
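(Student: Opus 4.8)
The plan is to prove the two bounds $r_{M_G}(V)\le g-1$ and $r_{M_G}(V)\ge g-1$ separately. The upper bound is immediate: by Proposition~\ref{prop_dependenceM_G}(b) every nonempty independent set $A$ satisfies $r^*(\delta(A))>|A|$, while $\delta(A)\subseteq E$ and monotonicity of $r^*$ give $r^*(\delta(A))\le r^*(E)=|E|-r(E)=(3g-3)-(2g-3)=g$, using $|V|=2g-2$, $|E|=3g-3$, and that $G$ is connected. Hence $|A|\le g-1$, and in particular every basis has at most $g-1$ elements.

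For the lower bound I would fix an ordering $v_1,\dots,v_{2g-2}$ of $V$ such that each initial segment $A_k:=\{v_1,\dots,v_k\}$ induces a connected subgraph of $G$; such an ordering exists because $G$ is connected (repeatedly append a vertex adjacent to the current set). I then track $r^*(\delta(A_k))$ along the chain $A_1\subsetneq A_2\subsetneq\dots\subsetneq A_{2g-2}=V$. The endpoints are pinned down by Lemma~\ref{lemma_rank_bondmatroid}: on the one hand $r^*(\delta(A_1))=3-\omega(V-\{v_1\})=2$, since $G$ is loopless and $2$-vertex-connected by Lemma~\ref{lemma_k-connectedness} (equivalently, this value is $\ge 2$ because $M_G$ is loopless, and trivially $\le 2$); on the other hand $r^*(\delta(A_{2g-2}))=r^*(E)=g$. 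At each step $v_{k+1}$ is adjacent to the connected set $A_k$, so Lemma~\ref{lemma:rank-increse-one-adjacent-vertex}(a) together with the monotonicity of $r^*(\delta(\cdot))$ (Lemma~\ref{lemma-submodular}) forces $r^*(\delta(A_{k+1}))-r^*(\delta(A_k))\in\{0,1\}$. Since the total increase over the $2g-3$ steps equals $g-2$, exactly $g-2$ of the steps are \emph{jumps} (steps at which $r^*(\delta(\cdot))$ strictly increases).

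The crucial input is then Lemma~\ref{lemma:rank-increse-one-adjacent-vertex}(b): at every jump the matroid rank $r_{M_G}$ strictly increases, while at the remaining steps it is nondecreasing. Since $M_G$ is loopless (Lemma~\ref{lemma_equality_circuits}(a)), $r_{M_G}(A_1)=1$, so $r_{M_G}(V)\ge 1+(g-2)=g-1$; combined with the upper bound this yields $r_{M_G}(V)=g-1$. I do not anticipate a genuine obstacle here, as the analytic work is already packaged in Lemmas~\ref{lemma_rank_bondmatroid} and~\ref{lemma:rank-increse-one-adjacent-vertex}; the only points requiring care are the existence of a connected vertex ordering and the evaluation of $r^*(\delta(\cdot))$ at the two ends of the chain, the latter being the place where $2$-connectivity of $G$ quietly enters. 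One could instead try to choose the ordering greedily so the rank climbs as early and as often as possible, but the observation that the total bond-rank increase is forced to be $g-2$ makes \emph{every} connected ordering work, which is cleaner.
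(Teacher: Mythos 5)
Your argument is correct and essentially identical to the paper's: the same upper bound via $r^*(E)=g$ and Proposition~\ref{prop_dependenceM_G}, and the same lower bound via a connected vertex ordering, counting the $g-2$ jumps of $r^*(\delta(\cdot))$ along the chain and converting them into strict increases of $r_{M_G}$ using Lemma~\ref{lemma:rank-increse-one-adjacent-vertex}. No substantive differences to report.
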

\begin{proof}
As $G$ is connected and trivalent, the rank formula for dual matroids \eqref{eqn:rank-fcn-dual-matroid} implies
$$r^*(E)=|E|-r(E) = |E| - (|V| - 1) =(3g-3)-(2g-3)=g.$$ 
By Proposition~\ref{prop_dependenceM_G}, any subset of $V$ of cardinality at least $g$ needs to be dependent in $M_G$. Thus, $r_{M_G}(V) \leq g-1$.
We now show that $r_{M_G}(V)\geq g-1$.
Choose any $v_1 \in V$ and set $A_1=\{v_1\}$. Then, $r^*(\delta(A_1))=2$ by Lemma \ref{lemma_rank_bondmatroid}. By sequentially adding adjacent vertices, construct a chain of vertex subsets
$$\{v_1\}=A_1\subsetneq A_2 \subsetneq \ldots \subsetneq A_{2g-2}=V$$
such that $|A_i|=i$ and $G[A_i]$ is connected. We have that $r^*(\delta(A_1))=2$, while $r^*(\delta(A_{2g-2}))= r^*(E)=g$. Since $r^*(\delta(A_{i+1}))\leq r^*(\delta(A_i))+1$ by Lemma~\ref{lemma:rank-increse-one-adjacent-vertex} (a), there have to be exactly $g-2$ indices $i_1< i_2 <\ldots <i_{g-2}$ with $r^*(A_{i_j+1})= r^*(A_{i_j})+1$. By Lemma \ref{lemma:rank-increse-one-adjacent-vertex} (b), it follows that $r_{M_G}(A_{i_j}) < r_{M_G}(A_{i_j+1})$. Thus, we have
$$1\leq r_{M_G}(A_{i_1}) < r_{M_G}(A_{i_2})<\dots<r_{M_G}(A_{i_{g-2}})<r_{M_G}(A_{i_{g-2}+1})$$
and therefore $r_{M_G}(V) \geq r_{M_G}(A_{i_{g-2}+1})\geq g-1$.
\end{proof}
The proof of Proposition~\ref{prop_cardinality_basis} is constructive, so it yields an algorithm that starts with any vertex of $G$ and constructs a basis of $M_G$ containing the vertex. See Algorithm~\ref{alg:basis}.

\begin{algorithm}[h] 
\caption{Computing a basis of $M_G$ containing a given vertex} \label{alg:basis}
\begin{algorithmic}[1]
\Require{Trivalent, $2$-connected graph $G=(V,E)$, $v\in V$.}
\Ensure{Basis $B$ of $M_G$ containing $v$.}
\State $B:= \{v\}$, $A:=B$
\While{$|B|<g-1$}
\State choose a vertex $w\in V\setminus A$ that is adjacent to $A$
\If{$r^*(\delta(A\cup\{w\}))>r^*(\delta(A))$}
\State $B=B\cup \{w\}$
\EndIf
\State $A=A\cup \{w\}$
\EndWhile
\State \Return $B$
	\end{algorithmic}
\end{algorithm}

\begin{lemma}\label{lemma_components_basis}
Let $B$ be a basis of $M_G$. Then $\omega(V-B)=\omega(B)$.
\end{lemma}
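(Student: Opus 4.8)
Let me think about this carefully. We have $G$ trivalent and $2$-connected, $B$ a basis of $M_G$. We want $\omega(V-B) = \omega(B)$.

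First, note $r_{M_G}(V) = g-1$ by Proposition~\ref{prop_cardinality_basis}, so $|B| = g-1$, and since $|V| = 2g-2$, also $|V-B| = g-1$. By Lemma~\ref{lemma_rank_increase}, $V-B$ is acyclic. But wait — is $B$ itself necessarily acyclic? A basis is independent, and Lemma~\ref{lemma:cyclic-dependent} says any cyclic subset is dependent, so yes, $B$ is also acyclic. So both $B$ and $V-B$ are acyclic, each of size $g-1$.

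Since $B$ is independent, Proposition~\ref{prop_circuitComponents}(b) (applied with $A' = B$) gives $\omega(B) \geq \omega(V-B)$. By the symmetry of the situation — $V-B$ is acyclic of the same size — I want to also get $\omega(V-B) \geq \omega(B)$, which would finish the proof. The natural idea: apply Proposition~\ref{prop_circuitComponents}(b) to $V-B$. For that I need $V-B$ to be independent in $M_G$, i.e. I need that the complement of a basis is independent. But that is precisely the self-duality statement (Theorem~\ref{thm_geometric_self-duality}, to be reproven combinatorially), which presumably comes \emph{after} this lemma — so I cannot assume it. So the symmetric argument must instead be made directly via a counting/rank computation rather than by invoking independence of $V-B$.

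**The counting approach.** I would compute $r^*(\delta(B))$ in two ways. Since $B$ is independent, Proposition~\ref{prop_dependenceM_G}(b) forces $r^*(\delta(B)) > |B| = g-1$, so $r^*(\delta(B)) \geq g$. But $r^*(\delta(B)) \leq r^*(E) = g$, so in fact $r^*(\delta(B)) = g$. Now apply Lemma~\ref{lemma_rank_bondmatroid}: since $B$ is acyclic,
\[
r^*(\delta(B)) = |B| + \omega(B) - \omega(V-B) + 1 = (g-1) + \omega(B) - \omega(V-B) + 1 = g + \omega(B) - \omega(V-B).
\]
Setting this equal to $g$ gives $\omega(B) - \omega(V-B) = 0$, i.e. $\omega(B) = \omega(V-B)$, as desired.

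**The main obstacle.** The only genuinely delicate point is justifying $r^*(\delta(B)) = g$; this rests on (i) $|B| = g-1$, which uses Proposition~\ref{prop_cardinality_basis}; (ii) $r^*(E) = g$, computed in the proof of Proposition~\ref{prop_cardinality_basis} from $r(E) = |V|-1 = 2g-3$; and (iii) $B$ being acyclic so that the clean formula $|\delta(B)| = 2|B| + \omega(B)$ from Lemma~\ref{lemma_rank_bondmatroid} applies — which follows from Lemma~\ref{lemma:cyclic-dependent} (cyclic $\Rightarrow$ dependent, hence a basis is acyclic). Everything else is bookkeeping. So the proof is short: establish that $B$ is acyclic, deduce $r^*(\delta(B)) = g$ by sandwiching between $g$ (from independence) and $r^*(E) = g$, then read off $\omega(B) = \omega(V-B)$ from the rank formula in Lemma~\ref{lemma_rank_bondmatroid}.
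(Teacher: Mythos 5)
Your proof is correct and follows essentially the same route as the paper: both arguments combine the upper bound $r^*(\delta(B))\leq r^*(E)=g$ with the rank formula of Lemma~\ref{lemma_rank_bondmatroid} for the acyclic set $B$ to get one inequality, and the independence of $B$ to get the other. The only (cosmetic) difference is that you invoke Proposition~\ref{prop_dependenceM_G}(b) to pin down $r^*(\delta(B))=g$ directly, whereas the paper cites Proposition~\ref{prop_circuitComponents}(b) for the inequality $\omega(B)\geq\omega(V-B)$; these amount to the same step.
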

\begin{proof}
Applying the acyclic case of Lemma \ref{lemma_rank_bondmatroid} to $r^*(\delta(B))\leq g$, we obtain that $\omega(B)\leq\omega(V-B)$.
Since $B$ is independent, the claim follows from Proposition~\ref{prop_circuitComponents}~(b).
\end{proof}

\begin{theorem}\label{thm:self-duality}
Let $G$ be a connected, trivalent graph. Then its graph curve matroid $M_G$ is identically self-dual if and only if $G$ is 2-edge-connected. 
\end{theorem}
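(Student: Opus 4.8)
The plan is to prove the two implications separately; the forward (``if'') direction is the substantial one, where the lemmas built up in this section are assembled into a combinatorial proof, while the converse is a short observation.

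For the converse I argue by contraposition. Suppose $G$ is trivalent, connected, but not $2$-edge-connected. By Lemma~\ref{lemma_equality_circuits}(a) the matroid $M_G$ then has a loop $\ell \in V$, so $r_{M_G}(\{\ell\}) = 0$. If $M_G$ were identically self-dual, the rank function of $M_G^*$ would coincide with that of $M_G$, forcing $\ell$ to be a loop of $M_G^*$ too, i.e.\ a coloop of $M_G$. But every matroid has a basis, a coloop lies in every basis, and a loop lies in no basis, so $\ell$ cannot be both. Hence $M_G$ is not ISD.

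For the forward direction, assume $G$ is trivalent, connected, and $2$-edge-connected. By Proposition~\ref{prop_cardinality_basis}, $r_{M_G}(V) = g-1$, while $|V| = 2g-2$. Since the bases of $M_G^*$ are precisely the complements of the bases of $M_G$, the matroid $M_G$ is ISD if and only if the complement of every basis is again a basis; and because $|V-B| = g-1$ equals the rank, $V-B$ is a basis as soon as it is independent. So it suffices to show: for every basis $B$ of $M_G$, the set $V-B$ is independent. Fix such a $B$. By Lemma~\ref{lemma:cyclic-dependent} the induced subgraph $G[B]$ is acyclic, by Lemma~\ref{lemma_rank_increase} so is $G[V-B]$, and by Lemma~\ref{lemma_components_basis} we have $\omega(B) = \omega(V-B)$. (This last equality is in fact automatic once $|B| = g-1$ and both $G[B], G[V-B]$ are forests: counting the edges of $G$ running between $B$ and $V-B$ from the two sides via trivalence gives $(g-1)+2\omega(B) = (g-1)+2\omega(V-B)$, which makes the resulting description of the bases visibly symmetric under $B \leftrightarrow V-B$.) Since $G[V-B]$ is acyclic, Proposition~\ref{prop_circuitComponents}(b) reduces the independence of $V-B$ to the inequality $\omega(A) \ge \omega(V-A)$ for every nonempty $A \subseteq V-B$.

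Proving this last inequality is the crux, and I expect it to be the main obstacle; everything else is assembly of the lemmas already in place. The available input is exactly the mirror statement: since $B$ is independent, $\omega(A'') \ge \omega(V-A'')$ for every nonempty $A'' \subseteq B$, together with $\omega(B) = \omega(V-B)$. I would argue by contradiction, choosing an inclusion-minimal nonempty $A \subseteq V-B$ with $\omega(A) < \omega(V-A)$; by Proposition~\ref{prop_circuitComponents}(a) such an $A$ is a circuit with $\omega(A)+1 = \omega(V-A)$. Writing $V = B \sqcup A \sqcup A'$ with $A' = (V-B)\setminus A$, all the relevant edge counts are pinned down by trivalence and by the fact that $G[A]$ and $G[A']$ are forests, using $|\delta(S)| = 2|S| + \omega(S)$ from the proof of Lemma~\ref{lemma_rank_bondmatroid}; one computes, for instance, that $G[V-A] = G[B \sqcup A']$ has cycle rank $|A'|+1$, hence contains a cycle, which must meet $A'$ since $G[B]$ is acyclic. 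The goal is then to convert this cycle, together with the forest structure of $B$ and $A'$, into a nonempty subset $A'' \subseteq B$ with $\omega(A'') < \omega(V-A'')$, contradicting the independence of $B$. A possibly cleaner implementation is an induction that peels vertices off $B$ one at a time toward a hypothetical bad subset, tracking how $\omega(\text{current set}) - \omega(\text{complement})$ evolves: deleting a vertex of degree $d$ in the current forest changes this quantity by $d+f-2$, where $f$ is the number of components of the complement joined to the deleted vertex, so it fails to be non-decreasing only when the deleted vertex is isolated in the current forest and its three edges all land in a single component of the complement. One then has to rule out this degenerate step — e.g.\ by a careful choice of peeling order, using $2$-edge-connectedness — so that the running quantity never drops below its initial value $0$.
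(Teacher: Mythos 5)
Your reduction is exactly the paper's: the converse via the loop in $M_G$ (Lemma~\ref{lemma_equality_circuits}(a)), the rank computation $r_{M_G}(V)=g-1$ so that $|V-B|$ equals the rank, the acyclicity of $V-B$ from Lemma~\ref{lemma_rank_increase}, and the reduction via Proposition~\ref{prop_circuitComponents}(b) to showing $\omega(A)\geq\omega(V-A)$ for every nonempty $A\subseteq V-B$ are all correct and match the paper. (Your aside that $\omega(B)=\omega(V-B)$ follows directly from trivalence and the forest structure of $G[B]$ and $G[V-B]$ is also correct, and is a slicker route to Lemma~\ref{lemma_components_basis}.) But the crux inequality is not proven: you explicitly flag it as ``the main obstacle'' and offer two sketches, neither of which closes. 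The first sketch correctly computes that $G[V-A]$ has cycle rank $|A'|+1$ and hence contains a cycle meeting $A'=(V-B)\setminus A$, but then states only a ``goal'' of converting this cycle into a bad subset $A''\subseteq B$; no mechanism for this conversion is given, and it is not clear one exists along these lines. The second sketch (peeling vertices off $B$) has an acknowledged degenerate case that you do not rule out. So there is a genuine gap precisely at the step that carries all the content.

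For comparison, the paper closes this gap with two ideas your sketches do not contain. First, instead of shrinking $A$ to an inclusion-minimal bad set, it \emph{enlarges} $A$ inside $V-B$ to a set $A'$ that is a union of entire connected components of $G[V-B]$, checking (by the same edge-count case analysis as in Lemma~\ref{lemma:rank-increse-one-adjacent-vertex}(a)) that the deficiency $\omega(A')<\omega(V-A')$ is preserved at each step, and that $A'\neq V-B$ by Lemma~\ref{lemma_components_basis}. This saturation forces every component of $G[V-A']$ to decompose as a disjoint union of whole components $A_i$ of $G[V-B]$ and $B_j$ of $G[B]$. Second, a pigeonhole count over the components of $G[V-A']$, using $\omega(A')<\omega(V-A')$ and $\omega(B)=\omega(V-B)=m$, produces a component $X=A_1\cup\dots\cup A_k\cup B_1\cup\dots\cup B_l$ with $k\geq l$ and $l<m$; then $B'=B_1\cup\dots\cup B_l$ satisfies $\omega(B')=l<k+1\leq\omega(V-B')$, so $B'$ is dependent by Proposition~\ref{prop_circuitComponents}(a), contradicting the independence of $B$. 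It is this transfer of the deficiency from a subset of $V-B$ to a subset of $B$ via the component decomposition of $G[V-A']$ that your proposal is missing.
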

\begin{proof}
If $G$ contains a disconnecting edge (i.e., a bridge), then $M_G$ has a loop by Lemma~\ref{lemma_equality_circuits} and is therefore not ISD. 

Assume that $G$ is $2$-connected. Let $B$ be a basis of $M_G$. Since $|V-B|=g-1=r_{M_G}(V)$, it suffices to prove that $V- B$ is independent. By Lemma \ref{lemma_rank_increase}, $V-B$ is acyclic. So, by Proposition \ref{prop_circuitComponents} (b), we need to show that for any subset $\emptyset\neq A\subseteq V-B$ we have $\omega(A)\geq \omega(V-A)$. We prove this by contradiction.
We assume that there exists a subset $\emptyset \neq A\subseteq V-B$ with $\omega(A)< \omega(V-A)$.

\emph{Claim:} There exists $A'\subseteq V-B$ containing $A$ such that $\omega(A') < \omega(V-A')$ and such that $G[A']$ is a union of connected components of $G[V-B]$.

To construct $A'$, we inductively add adjacent vertices to $A$. Let $v\in V-(B\cup A)$ be a vertex that is adjacent to $A$. (If no such vertex exists, then we set $A'=A$ and we are done.) 
By a similar analysis as in the proof of Lemma~\ref{lemma:rank-increse-one-adjacent-vertex}~(a), we have $\omega(A\cup \{v\}) < \omega(V-(A\cup \{v\}))$.
Replacing $A$ by $A\cup \{v\}$ and repeating the above procedure until no longer possible yields the desired set $A'$.
Note that $A'\subsetneq V- B$, because otherwise we have 
$$\omega(V-B) = \omega(A') < \omega(V - A') = \omega(B),$$
which contradicts Lemma \ref{lemma_components_basis}.
This proves the claim.

Let now $m=\omega(B)=\omega(V-B)$. We denote by $B_1,\dots,B_m$ the vertex sets of the connected components of $G[B]$ and by $A_1,\dots,A_m$ the vertex sets of the components of $G[V-B]$. 
Since $G[A']\neq G[V-B]$ is a union of components of $G[V-B]$,  the vertex set of any component of $G[V-A']$ is a disjoint union of some $A_i$'s and $B_j$'s. 

For each component $G[C]$ of $G[V-A']$, let $f_A(C):=|\{i \in [m] \mid A_i \subseteq C\}|$ and let $f_B(C) := |\{i \in [m] \mid B_i \subseteq C\}|$. 
There must exist a component $G[C]$ such that $f_A(C)\geq f_B(C)$. Otherwise, $f_B(C)\geq f_A(C)+1$ for all components $G[C]$ of $G[V-A']$, and thus
\begin{equation*}
m = \sum_{\substack{G[C] \textrm{ comp. } \\ \textrm{ of } G[V-A']}} \hspace{-0.2cm}f_B(C) 
\ \geq \ \omega(V-A') +\hspace{-0.2cm}\sum_{{\substack{G[C] \textrm{ comp. } \\ \textrm{ of } G[V-A']}}} \hspace{-0.2cm}f_A(C) 
\ >\  \omega(A') +\hspace{-0.2cm} \sum_{{\substack{G[C] \textrm{ comp. } \\ \textrm{ of } G[V-A']}}} \hspace{-0.2cm}f_A(C)
\ =\  m.
\end{equation*}
Thus, we can now fix a component $G[C]$ of $G[V-A']$ with $f_A(C)\geq f_B(C)$. After relabeling if necessary, we can assume that $C=A_1\cup \ldots \cup A_k\cup B_1\cup \ldots\cup B_l$ with $k\geq l$. 
Then, consider the set $B':=B_1\cup\ldots\cup B_l$, which satisfies
$$\omega(B')=l<k+1\leq \omega(V-B'),$$
where the last inequality follows because $G[A_1], \dots, G[A_k]$ are connected components of $G[V-B']$, and there is at least one other component containing vertices in $A'$.
Thus, by Proposition~\ref{prop_circuitComponents}~(a), $B'$ is dependent in $M_G$.
As $B'\subseteq B$ and $B$ is a basis, this is a contradiction. \end{proof}

\section{Different Graphs Giving the Same Matroid \texorpdfstring{$M_G$}{MG}} \label{section-non-isomorphic-graphs}

As mentioned after Definition~\ref{def:matroid_graphcurve}, 
we can have non-isomorphic graphs, with non-isomorphic cographic matroids, defining isomorphic graph curve matroids. This phenomenon is already mentioned in \cite{geiger2022selfdual}. Describing criteria for when two non-isomorphic graphs define the same graph curve matroid remains an open question. Here, we provide a sufficient condition using \emph{2-switches} of graphs, as defined in \cite{switching}.

Let $G_1 = (V_1, E_1)$ and $G_2 = (V_2, E_2)$ be two $2$-connected graphs. Let $G$ be any graph obtained from the disjoint union $G_1 \sqcup G_2$ via $2$-switching any two edges $(a_1, b_1) \in E_1$ 
and $(a_2,b_2) \in E_2$, i.e., replacing them with two new edges $(a_1,a_2)$ and $(b_1,b_2)$, as pictured in Figure~\ref{fig:graph-direct-sum}. 

\begin{figure}[h!]
    \centering

    \begin{tikzpicture}[scale = 0.27]
        \coordinate (18) at (0,0);
        \coordinate (1) at (-2,3);
        \coordinate (2) at (2,3);
        \coordinate (3) at (0,6);
        
        \foreach \i in {1,2,3,18} {\path (\i) node[circle, black, fill, inner sep=2]{};}

        \draw[black] (18)--(1)--(2)--(18);
        \draw[black] (1)--(3)--(2);
        \draw[black] (3) .. controls (5,6) and (5,0) .. (18);

        \node[below] at (18) {\small $b_1$};
        \node[above] at (3) {\small $a_1$};

        \coordinate (4) at (11,0);
        \coordinate (5) at (9,3);
        \coordinate (6) at (13,3);
        \coordinate (7) at (11,6);

        \foreach \i in {4,5,6,7} {\path (\i) node[circle, black, fill, inner sep=2]{};}

        \draw[black] (4)--(5)--(6)--(4);
        \draw[black] (5)--(7)--(6);
        \draw[black] (7) .. controls (6,6) and (6,0) .. (4);

        \node[below] at (4) {\small $b_2$};
        \node[above] at (7) {\small $a_2$};

        \node[right] at (17,3) {\large $\rightsquigarrow$};

        \coordinate (8) at (26,0);
        \coordinate (9) at (24,3);
        \coordinate (10) at (28,3);
        \coordinate (11) at (26,6);

        \coordinate (12) at (33,0);
        \coordinate (13) at (31,3);
        \coordinate (14) at (35,3);
        \coordinate (15) at (33,6);

        \foreach \i in {8,9,10,11,12,13,14,15} {\path (\i) node[circle, black, fill, inner sep=2]{};}

        \draw[black] (8)--(9)--(10)--(8);
        \draw[black] (9)--(11)--(10);
        \draw[black] (12)--(13)--(14)--(12);
        \draw[black] (13)--(15)--(14);
        \draw[black] (8)--(12);
        \draw[black] (11)--(15);

        \node[below] at (8) {\small $b_1$};
        \node[above] at (11) {\small $a_1$};
        \node[below] at (12) {\small $b_2$};
        \node[above] at (15) {\small $a_2$};
    \end{tikzpicture}
    
    \caption{A $2$-switch of the disjoint union of two graphs.}
    \label{fig:graph-direct-sum}
\end{figure}
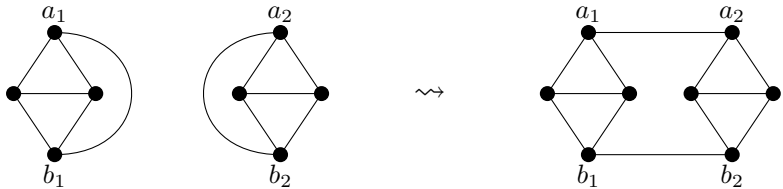

Using the first statement of Lemma \ref{lemma_rank_bondmatroid} (which does not require the running trivalence assumption from Section~\ref{section_circuits}) and Proposition \ref{prop_dependenceM_G}, it follows easily that $A_1 \subseteq V_1$ and $A_2 \subseteq V_2$ are independent in $M_{G_1}$ and $M_{G_2}$, respectively, if and only if $A_1 \cup A_2$ is independent in  $M_{G}$. We therefore obtain the following result.

\begin{proposition}\label{prop:direct-sum-split}
In the setting above, we have that $M_G = M_{G_1} \oplus M_{G_2}$.
\end{proposition}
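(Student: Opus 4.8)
The plan is to obtain the proposition from the independence equivalence stated just above it. A matroid is determined by its independent sets, and since $V = V_1 \sqcup V_2$, the independent sets of $M_{G_1} \oplus M_{G_2}$ are exactly the sets $A_1 \cup A_2$ with $A_i \subseteq V_i$ independent in $M_{G_i}$. Hence ``$M_G = M_{G_1} \oplus M_{G_2}$'' is literally the assertion that a subset $A \subseteq V$ is independent in $M_G$ if and only if $A \cap V_1$ and $A \cap V_2$ are independent in $M_{G_1}$ and $M_{G_2}$, which is precisely that equivalence; so the proposition is immediate from it. To keep the proposal self-contained, here is how I would prove the equivalence itself. First note that $G$ is connected --- the two new edges join the two parts --- and in fact trivalent and $2$-edge-connected: by $2$-edge-connectedness of $G_1$ and $G_2$, deleting $(a_i,b_i)$ keeps $G_i$ connected, and from this one checks that no edge of $G$ is a bridge. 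So Lemma~\ref{lemma_rank_bondmatroid} applies to all of $M_G, M_{G_1}, M_{G_2}$, and by Proposition~\ref{prop_dependenceM_G} independence in each is governed by the condition $r^*(\delta(\cdot)) > |\cdot|$ on nonempty subsets.

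Thus I would reduce everything to comparing, for $A' \subseteq V$ with $A'_i := A' \cap V_i$, the quantity $r^*_G(\delta_G(A')) = |\delta_G(A')| - |A'| - \omega_G(V-A') + 1$ with $r^*_{G_i}(\delta_{G_i}(A'_i)) = |\delta_{G_i}(A'_i)| - |A'_i| - \omega_{G_i}(V_i-A'_i) + 1$, using Lemma~\ref{lemma_rank_bondmatroid}. The edge-count part is transparent, since passing from $G_1 \sqcup G_2$ to $G$ just deletes the two switched edges and inserts the two new ones. The component counts require a short case analysis. If $A'$ lies on one side, say $A' = A'_1 \subseteq V_1$, I would show $r^*_G(\delta_G(A')) = r^*_{G_1}(\delta_{G_1}(A'_1))$: deleting $(a_1,b_1)$ from the \emph{induced} subgraph $G_1[V_1-A'_1]$ and then re-attaching the $G_2$-side along the surviving new edges leaves $\omega_G(V-A'_1) = \omega_{G_1}(V_1-A'_1)$, since a possible ``$+1$'' from a disconnecting deletion is exactly cancelled by an extra merge across the two parts (and when there is no such ``$+1$'' there is also one fewer merge); this is where one uses that $G_2 - (a_2,b_2)$ is connected, together with a distinction on how many of $a_1, b_1$ lie in $A'_1$. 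If $A'$ meets both sides, I would prove the inequality $r^*_G(\delta_G(A')) \ge r^*_{G_1}(\delta_{G_1}(A'_1)) + r^*_{G_2}(\delta_{G_2}(A'_2)) - 1$; here a short but multi-case computation with Lemma~\ref{lemma_rank_bondmatroid} does the job, the recurring mechanism being that any increase in the component count caused by deleting $(a_i,b_i)$ from $G_i[V_i-A'_i]$ is offset --- either because $(a_i,b_i)$ is incident to $A'_i$ (so it was counted in $\delta_{G_i}(A'_i)$ but disappears from $\delta_G(A')$), or because a surviving new edge merges a component of the $G_1$-side with one of the $G_2$-side.

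Granting these two comparisons, the equivalence follows from Proposition~\ref{prop_dependenceM_G}. For the ``only if'' direction: every nonempty $A'_1 \subseteq A_1$ satisfies $r^*_{G_1}(\delta_{G_1}(A'_1)) = r^*_G(\delta_G(A'_1)) > |A'_1|$, and symmetrically on $V_2$, so $A_1$ and $A_2$ are independent. For the ``if'' direction: a nonempty $A' \subseteq A_1 \cup A_2$ lying on one side is handled by the same equality, while one meeting both parts satisfies $r^*_G(\delta_G(A')) \ge (|A'_1|+1) + (|A'_2|+1) - 1 = |A'|+1 > |A'|$, so $A_1 \cup A_2$ is independent. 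I expect the main obstacle to be exactly the component-count bookkeeping: one must track how deleting an edge from an induced subgraph interacts with re-attachment along the new edges, which forces the case distinctions on whether $a_i, b_i$ lie in $A'_i$ and on which new edges survive in $G[V-A']$. None of the cases is hard, but there are several of them, and $2$-edge-connectedness (equivalently, by Lemma~\ref{lemma_k-connectedness}, looplessness together with $2$-vertex-connectedness) enters precisely to guarantee that no single edge is a bridge, so that every potential ``$+1$'' in the component count is compensated.
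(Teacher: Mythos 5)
Your proposal is correct and takes essentially the same route as the paper: the paper also derives the proposition immediately from the equivalence ``$A_1$ and $A_2$ are independent in $M_{G_1}$ and $M_{G_2}$ iff $A_1\cup A_2$ is independent in $M_G$,'' which it obtains from Lemma~\ref{lemma_rank_bondmatroid} and Proposition~\ref{prop_dependenceM_G} exactly as you do. Your component-count bookkeeping (the equality for one-sided subsets and the inequality $r^*_G(\delta_G(A'))\ge r^*_{G_1}(\delta_{G_1}(A'_1))+r^*_{G_2}(\delta_{G_2}(A'_2))-1$ for two-sided ones) just makes explicit the case analysis the paper leaves as ``it follows easily.''
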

In particular, this proves that the set of graph curve matroids coming from $2$-connected graphs is closed under taking direct sums.

Moreover, any $2$-connected graph $G$ that admits an edge cut of size $2$ can be written as the $2$-switch of the disjoint union of two smaller $2$-connected graphs. Thus, its associated matroid $M_G$ splits as a direct sum by Proposition~\ref{prop:direct-sum-split}. Therefore, the graph curve matroids arising from $3$-connected graphs are the building blocks of the class of graph curve matroids associated to $2$-connected graphs.

\begin{example}
The theta graph $G$ in Figure~\ref{fig:theta} has $M_G = U_{1,2}$. The soda can graph $H$ from Figure~\ref{fig:soda_can} is a $2$-switch of the disjoint union of two theta graphs, and we saw in Example~\ref{ex_soda} that $M_H =  U_{1,2} \oplus U_{1,2}$.
\end{example}

\begin{corollary}
    Two non-isomorphic graphs define the same graph curve matroid if they are both obtained from the same smaller $2$-connected graphs via $2$-switching different 
    choices of edges.
\end{corollary}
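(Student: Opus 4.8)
The plan is to obtain this as an immediate consequence of the preceding Proposition. Suppose $G$ and $G'$ are both built from the disjoint union $G_1\sqcup G_2$ of two trivalent, $2$-connected graphs: say $G$ arises by $2$-switching a pair of edges $(a_1,b_1)\in E_1$ and $(a_2,b_2)\in E_2$, while $G'$ arises by $2$-switching a possibly different pair $(a_1',b_1')\in E_1$ and $(a_2',b_2')\in E_2$. The crucial observation is that a $2$-switch alters only the edge set, so $G$ and $G'$ share the vertex set $V=V_1\sqcup V_2$; hence $M_G$ and $M_{G'}$ are defined on the same ground set and it makes sense to compare them.

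The argument then has essentially one step. Applying the Proposition to $G$ gives $M_G=M_{G_1}\oplus M_{G_2}$ as a matroid on $V_1\sqcup V_2$, and applying it again to $G'$ gives $M_{G'}=M_{G_1}\oplus M_{G_2}$ on the same ground set. Therefore $M_G=M_{G'}$ --- in fact literally equal, not merely isomorphic. In particular, whenever a choice of the two switched edge pairs happens to produce non-isomorphic graphs $G$ and $G'$, these graphs still carry the same graph curve matroid, which is exactly the claim.

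The only point needing care --- and the closest thing here to an obstacle --- is to confirm that the Proposition applies, i.e.\ that a $2$-switch of the disjoint union of two trivalent, $2$-connected graphs is again trivalent and $2$-edge-connected. Trivalence is immediate, since the switch removes one edge and adds one edge at each of $a_1,b_1,a_2,b_2$, leaving all degrees equal to $3$. For $2$-edge-connectivity one checks that deleting any single edge of $G$ keeps it connected: removing one cross edge leaves the other one joining the two connected graphs obtained from $G_1$ and $G_2$ by deleting the switched edges, and removing an interior edge of, say, the $G_1$-part splits it into at most two pieces which --- because $G_1$ is $2$-edge-connected --- must contain $a_1$ and $b_1$ respectively, so each piece remains joined to the $G_2$-part through a cross edge. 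Strictly speaking, for the bare equality $M_G=M_{G'}$ only the Proposition is needed, whose proof rests on Lemma~\ref{lemma_rank_bondmatroid} and Proposition~\ref{prop_dependenceM_G}; the connectivity remark is what additionally situates $G$ and $G'$ in the class of graphs to which the algebro-geometric description of Subsection~\ref{section_graphCurvesCanonicalMaps} applies.
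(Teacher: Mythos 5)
Your proposal is correct and matches the paper's (implicit) argument exactly: the corollary is stated as an immediate consequence of the preceding Proposition, obtained by applying $M_G = M_{G_1}\oplus M_{G_2}$ to each of the two $2$-switched graphs and noting they share the ground set $V_1\sqcup V_2$. Your extra verification that the $2$-switch stays trivalent and $2$-edge-connected is a reasonable sanity check but, as you note, not needed for the equality of matroids itself.
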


\bibliographystyle{amsalpha}
\bibliography{biblio}{}

\end{document}